\numberwithin{equation}{section}
\theoremstyle{plain}
\newtheorem{theorem}{Theorem}[section]
\newtheorem{proposition}[theorem]{Proposition}
\newtheorem{lemma}[theorem]{Lemma}
\newtheorem{corollary}[theorem]{Corollary}
\theoremstyle{definition}
\def\Om{\Omega}
\def\C{\mathbb C}
\begin{document}

\title {Bergman-Toeplitz operators on fat Hartogs triangles}

\keywords{Bergman-Toeplitz operator; Berman projection; Fat Hartogs triangle; $L^{p}$ regularity; Schur's test}

\email{tkhanh@uow.edu.au \, jiakunl@uow.edu.au \, ttp754@uowmail.edu.au }
  
\author[Tran Vu Khanh, Jiakun Liu, Phung Trong Thuc]{Tran Vu Khanh, Jiakun Liu, Phung Trong Thuc}

\address
	{Institute for Mathematics and its Applications, School of Mathematics and Applied Statistics,
	University of Wollongong,
	Wollongong, NSW 2522, AUSTRALIA.}	
 \thanks{Khanh was supported by ARC grant
 	DE160100173; Liu was supported by ARC grant DP170100929; Thuc was supported by PhD scholarship in ARC grant DE140101366}	

\subjclass[2010]{Primary 32A25; Secondary 32A36.}
	
\begin{abstract}
In this paper, we obtain some $L^{p}$ mapping properties of the Bergman-Toeplitz operator
\[
f\longrightarrow T_{K^{-\alpha}}\left(f\right):=\intop_{\Omega}K_{\Omega}\left(\cdot,w\right)K^{-\alpha}\left(w,w\right)f\left(w\right)dV(w)
\]
on fat Hartogs triangles $\Omega_{k}:=\left\{ \left(z_{1},z_{2}\right)\in\mathbb{C}^{2}:\left|z_{1}\right|^{k}<\left|z_{2}\right|<1\right\} $, 
where $\alpha\in\mathbb{R}$ and $k\in \mathbb Z^+$.
\end{abstract}

\maketitle
\section{Introduction}
In this paper, we continue our study of  the ``gain" $L^p$-estimate properties of Bergman-Toeplitz operators on pseudoconvex domains following \cite{KLT17}. Let $\Om\subset\C^n$ be a bounded domain, and let $A^{p}\left(\Omega\right)$ be the closed subspace of holomorphic functions in $L^{p}\left(\Omega\right)$. 
Given a measurable function $\psi$ on $\Omega$, the Bergman-Toeplitz operator with symbol $\psi$ is defined by 
\begin{equation}
f\longrightarrow T_{\psi}\left(f\right)\left(z\right):=\intop_{\Omega}K\left(z,w\right)\psi\left(w\right)f\left(w\right)dV(w),
\end{equation}
where  $K(\cdot, \cdot)$ is the Bergman kernel associated to $\Om$.
Recently, we proved in \cite{KLT17} that for a large class of weakly pseudoconvex
smooth domains in $\mathbb{C}^{n}$, the Bergman-Toeplitz operator $T_{\psi}$ with $\psi(z)=K^{-\alpha}(z,z)$ maps from $L^{p}\left(\Omega\right)$ to $A^{q}\left(\Omega\right)$ continuously if and only if $\alpha\geq\frac{1}{p}-\frac{1}{q}$, for any $1<p\le q<\infty$. 
As a corollary, we have the associated Bergman projection is self $L^p$ bounded for any $p\in (1,+\infty)$, and moreover, that is sharp in the sense that the Bergman projection is not bounded from $L^p$ to $A^q$ for any $q>p$, (see also \cite{McNSt94,PhSt77,McN89,ChDu06}).\\

As a non-smooth case, the fat Hartogs triangle $\Om_k\subset \C^2$ is defined by 
\begin{equation}\label{intHa}
\Omega_{k}:=\left\{ \left(z_{1},z_{2}\right)\in\mathbb{C}^{2} : \left|z_{1}\right|^{k}<\left|z_{2}\right|<1\right\},\qquad\mbox{for }  k\in \mathbb Z^+.
\end{equation}
The model of Hartogs triangles and their variants has recently attracted
particular attention through the study of several problems in complex
analysis; see e.g. \cite{ChL17,EdMcN16,ChZe16,ChSh13,HaZe17}.  The fat Hartogs triangle $\Om_k$ is a pseudoconvex domain but not a hyperconvex domain (see e.g. \cite{ZWO99,CCW99} for the characterisation of pseudoconvexity and hyperconvexity of Reinhardt domains). 
Nevertheless, the Bergman kernel of $\Om_k$ can be computed explicitly thanks to the work of Edholm \cite{Edh16}. This important fact provides useful estimates on the Bergman kernel and then the self $L^p$ boundedness of the Bergman projection. In particular, Edholm and McNeal \cite{EdMcN17} proved that the Bergman projection associated to $\Om_k$ is self $L^p$-bounded  if and only if $p\in (\frac{2k+2}{k+2},\frac{2k+2}{k})$. This generalises the result in the case $k=1$ by Chakrabarti and Zeytuncu \cite{ChZe16}. It should be interesting to add that the Bergman projections associated to the
Hartogs triangle domains $\Omega_{\gamma}:=\left\{ \left(z_{1},z_{2}\right)\in\mathbb{C}^{2}:\left|z_{1}\right|^{\gamma}<\left|z_{2}\right|<1\right\} $
with $\gamma>0,\gamma\notin\mathbb{Q}$  are $L^p$ bounded if and only if $p=2$, see \cite{EdMcN17}. \\

It is reasonable to expect that for Hartogs triangles $\Omega_{k}$, the Bergman projections cannot gain the $L^{p}$ regularity.
Moreover, it is also of particular interest to obtain a holomorphic function with higher regularity from an input function in the $L^{p}$ space. Motivated by this, in this paper we study the Bergman-Toeplitz operators $T_{K^{-\alpha}}$, where $K$ is the Bergman kernel on the diagonal and $\alpha\in\mathbb{R}$. We shall prove that for  $1<p\le q<\infty$ such that $\frac{k+2}{2k}-\frac{1}{kp}>\frac{1}{q}>\frac{k}{2k+2}$, the Bergman-Toeplitz operator $T_{K^{-\alpha}}$ is $L^p\text-L^q$ bounded if and only if $\alpha\ge \frac{1}{p}-\frac{1}{q}$.  It is natural to show that if $q\ge \frac{2k+2}{k}$, then $T_{K^{-\alpha}}$ cannot be bounded from $L^{p}\left(\Omega_{k}\right)$
to $A^{q}\left(\Omega_{k}\right)$ for any $\alpha\in\mathbb{R}$. Surprisingly, for sufficiently small $p,q$, indeed, $\frac{1}{q}+\frac{1}{kp}\geq\frac{k+2}{2k}$,   we shall point out that  there exists $\alpha>0$ such that $T_{K^{-\alpha}}$ is still bounded from $L^p(\Om_k)$ to $A^q(\Om_k)$.   The precise statement of our main result is as follows:

\begin{theorem}\label{thmIn1}
	For $k\in \mathbb Z^+$, let $\Omega_{k}$ be the Hartogs triangle domain  defined by \eqref{intHa} and let $1<p\leq q<\infty$. Then we have the following conclusions:
	\begin{enumerate}[(i)]
		\item If $\frac{k}{2k+2}\geq\frac{1}{q}$, then there is no $\alpha\in\mathbb{R}$ such that $T_{K^{-\alpha}}:L^p(\Omega_{k})\to A^q(\Omega_{k})$ continuously.
		\item If $\frac{k+2}{2k}-\frac{1}{kp}>\frac{1}{q}>\frac{k}{2k+2}$ then  $T_{K^{-\alpha}}:L^p(\Omega_{k})\to A^q(\Omega_{k})$ continuously if and only if 
		$\alpha\ge \frac{1}{p}-\frac{1}{q}$.
		\item If $\frac{1}{q}\geq\frac{k+2}{2k}-\frac{1}{kp}$ then  $T_{K^{-\alpha}}:L^p(\Omega_{k})\to A^q(\Omega_{k})$ continuously if and only if 
		$\alpha> \frac{1}{p}-\left(\frac{k+2}{2k}-\frac1{kp}\right)$.
	\end{enumerate}
\end{theorem}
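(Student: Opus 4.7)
My plan is to exploit Edholm's explicit formula for the Bergman kernel of $\Omega_{k}$ to extract two-sided pointwise estimates of the form $K(w,w)\asymp(1-|w_{2}|^{2})^{-2}(|w_{2}|^{2/k}-|w_{1}|^{2})^{-2}$ and a comparable majorant for $|K(z,w)|$ in terms of factors $|1-z_{2}\bar w_{2}|$ and $|(z_{2}\bar w_{2})^{2/k}-z_{1}\bar w_{1}|$. Because $\Omega_{k}$ is invariant under the $\mathbb{T}^{2}$-action $(z_{1},z_{2})\mapsto(e^{i\theta_{1}}z_{1},e^{i\theta_{2}}z_{2})$, all the integrals that appear collapse to weighted one-dimensional integrations on a model triangle in $\mathbb{R}^{2}$, where one can appeal to the classical lemma on integrals of $(1-r)^{a}(r-s)^{-b}$ on an interval.

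For the sufficiency half of (ii) and (iii) I would apply Schur's test to the positive operator with kernel $|K(z,w)|K(w,w)^{-\alpha}$, trying auxiliary weights of the form $h(w)=(1-|w_{2}|^{2})^{s_{1}}(|w_{2}|^{2/k}-|w_{1}|^{2})^{s_{2}}$. The two Schur inequalities expand into a linear system on the pair $(s_{1},s_{2})$ whose coefficients depend on $p,q,\alpha,k$; solvability is governed exactly by the threshold $\frac{1}{q}>\frac{k}{2k+2}$ and splits at the line $\frac{1}{q}=\frac{k+2}{2k}-\frac{1}{kp}$. In regime (ii) the critical value $\alpha=\frac{1}{p}-\frac{1}{q}$ is attainable with equality, while in (iii) the marginal one-dimensional integral picks up a logarithmic loss at the inner boundary, which forces the strict inequality $\alpha>\frac{1}{p}-\bigl(\frac{k+2}{2k}-\frac{1}{kp}\bigr)$.

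For the necessity halves of (ii) and (iii) I would plug in test inputs of the form $f_{\beta}(w)=(1-|w_{2}|^{2})^{\beta_{1}}(|w_{2}|^{2/k}-|w_{1}|^{2})^{\beta_{2}}$, or equivalently monomials $w_{1}^{a}w_{2}^{b}$, for which $T_{K^{-\alpha}}f_{\beta}$ is computable up to an explicit constant (via the series expansion of $K$ in monomials, which is diagonal for Reinhardt domains). Choosing $\beta$ so that $\|f_{\beta}\|_{L^{p}}$ is finite and then pushing $\beta_{2}$ to its critical value forces the $L^{q}$-norm of the image to diverge unless the asserted inequality on $\alpha$ holds, with the endpoint in (iii) being strict for the same log-type reason.

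For part (i), when $q\ge\frac{2k+2}{k}$, I would combine the Edholm--McNeal failure of the Bergman projection with a direct monomial test. For $\alpha\le 0$ the weight $K^{-\alpha}(w,w)$ stays bounded away from $0$ on every compact piece of $\Omega_{k}$, so unboundedness reduces to that of the unweighted projection; for $\alpha>0$ one tests on the extremal holomorphic monomials $z_{1}^{a}z_{2}^{b}$, uses $T_{K^{-\alpha}}(z_{1}^{a}z_{2}^{b})=c_{a,b}z_{1}^{a}z_{2}^{b}$, and tracks the ratio of $L^{q}$ to $L^{p}$ norms as $(a,b)$ approaches the edge of admissibility, to see that it cannot stay bounded for any $\alpha\in\mathbb{R}$. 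The principal obstacle throughout is threading the Schur exponents so that the two distinct singular directions $|w_{2}|\to 1$ and $|w_{2}|^{2/k}-|w_{1}|^{2}\to 0$ are balanced simultaneously; this is precisely what produces the two-regime structure (ii)/(iii) and the somewhat unusual critical numerology $\frac{k}{2k+2}$ and $\frac{k+2}{2k}-\frac{1}{kp}$.
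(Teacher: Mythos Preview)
Your Schur-test plan for sufficiency is on the right track and matches the paper's strategy, but two ingredients are missing that would block regime (iii). First, your on-diagonal estimate omits the factor $|w_{2}|^{2}$: the correct comparison is $K(w,w)\approx |w_{2}|^{2}(1-|w_{2}|^{2})^{-2}(|w_{2}|^{2}-|w_{1}|^{2k})^{-2}$, and likewise $|K(z,w)|\lesssim |z_{2}\bar w_{2}|\,|1-z_{2}\bar w_{2}|^{-2}|z_{2}\bar w_{2}-(z_{1}\bar w_{1})^{k}|^{-2}$. The singularity at $w_{2}\to 0$ (not only at $|w_{2}|\to 1$ and at the inner boundary) is precisely what produces the threshold $\frac{k+2}{2k}-\frac{1}{kp}$ and the strict inequality in (iii). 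Second, for the same reason your Schur weights need a \emph{third} free parameter: the paper uses $h(w)=|r(w)|^{-\beta}|w_{2}|^{\gamma}$ with $r(w)=(|w_{2}|^{2}-|w_{1}|^{2k})(1-|w_{2}|^{2})$, and it is the choice of $\gamma$ (equal to $\tfrac{2}{q}-\tfrac{1}{p}$ in (ii) and $-2\alpha+\tfrac{1}{p}$ in (iii)) that separates the two regimes. With only $(s_{1},s_{2})$ tied to the two boundary factors you will not be able to balance the origin singularity in (iii). The paper also invokes a genuine $L^{p}\!\to\!L^{q}$ Schur lemma (with an exponent split $|K|^{\eta p'}$ versus $|K|^{(1-\eta)q}$), not the classical self-$L^{p}$ version; make sure you are using that.

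For the necessity halves the paper diverges sharply from your plan. For (ii) it does \emph{not} test on explicit inputs at all: instead it bounds $\int_{\Omega_{k}}|K(z,w)|^{2}K^{-\alpha}(w,w)|w_{2}|^{2}\,dw$ from above using H\"older and the assumed boundedness of $T_{K^{-\alpha}}$, and from below by restricting to the sublevel set $\{G(\cdot,z)<-1\}$ of the pluricomplex Green function and invoking the Herbort--B\l ocki inequality together with a lemma that $K(w,w)|w_{2}|^{2}\approx K(z,z)|z_{2}|^{2}$ on that set; sending $z_{2}\to 1$ forces $\alpha\ge\tfrac{1}{p}-\tfrac{1}{q}$. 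Your monomial/test-function approach may also succeed here, but you should be aware that this is not what the paper does. For (iii) the paper does use a test sequence, but a rather delicate one: $f_{j}(z)=h(|z_{2}|)\bar z_{2}\cdot\mathbf{1}_{\{|z_{2}|>a_{j+1}\}}$ with a piecewise power $h$ chosen so that $\sup_{j}\|f_{j}\|_{L^{p}}<\infty$ while $\|T_{K^{-\alpha}}f_{j}\|_{L^{q}}\to\infty$ unless the strict inequality on $\alpha$ holds; the log-divergence you anticipated is exactly the mechanism.

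Finally, for (i) the paper's argument is much cleaner than tracking ratios along a family of monomials: one simply computes $T_{K^{-\alpha}}(\bar z_{2})=c\,z_{2}^{-1}$ with $c\neq 0$ (using that $K^{-\alpha}(w,w)$ is radial in $(|w_{1}|,|w_{2}|)$ and the orthogonal monomial basis), notes $\bar z_{2}\in L^{p}$ for every $p$, and checks $z_{2}^{-1}\notin L^{q}$ for $q\ge\tfrac{2k+2}{k}$. This handles all $\alpha$ at once.
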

The proof of this theorem is divided into four small theorems below. In \S2, Theorem~\ref{thm 1} and~\ref{thm 2}, we provide the proof of part $(i)$ and the sufficient conditions of $(ii)$ and $(iii)$ in Theorem~\ref{thmIn1}. In \S3, Theorem~\ref{thm 3} and~\ref{thm 4}, we prove the necessary conditions of $(ii)$ and $(iii)$ in Theorem~\ref{thmIn1}.  The following corollary is a direct consequence of Theorem \ref{thmIn1}.

\begin{corollary}\label{cor1} Let $P$ be the Bergman projection associated to $\Om_k$. Then
	\begin{enumerate}[(i)]
		\item $P$ maps  from $L^p(\Om_k)$ to $A^p(\Om_k)$ continuously if and only if $p\in (\frac{2k+2}{k+2},\frac{2k+2}{k})$; and
		\item $P$ cannot map from $L^{p}\left(\Omega_{k}\right)$
		to $A^{q}\left(\Omega_{k}\right)$ continuously if  $1<p<q<\infty$. 
		\end{enumerate}
	
\end{corollary}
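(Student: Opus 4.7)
The plan is to apply Theorem~\ref{thmIn1} with $\alpha = 0$, since the Bergman projection $P$ coincides with the Bergman-Toeplitz operator $T_{K^{-\alpha}}$ when $\alpha = 0$ (the symbol $K^{0}$ is identically $1$, so $T_{1}$ is exactly the Bergman projection).

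For part $(i)$, I would set $p = q$ and translate the three alternatives of Theorem~\ref{thmIn1} into conditions on $p$ alone. Case $(i)$ of the theorem becomes $p \geq \frac{2k+2}{k}$, in which no $\alpha \in \mathbb{R}$ yields a bounded operator, so in particular $\alpha=0$ fails. Case $(ii)$ becomes the open interval $p \in \bigl(\frac{2k+2}{k+2},\frac{2k+2}{k}\bigr)$, once one clears the inequality $\frac{k+2}{2k} - \frac{1}{kp} > \frac{1}{p}$ (which rearranges to $p > \frac{2k+2}{k+2}$); here the required threshold $\alpha \geq \frac{1}{p} - \frac{1}{q} = 0$ is met by $\alpha = 0$. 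Case $(iii)$ becomes $p \leq \frac{2k+2}{k+2}$, where the strict threshold $\alpha > \frac{1}{p} - \bigl(\frac{k+2}{2k} - \frac{1}{kp}\bigr) \geq 0$ excludes $\alpha = 0$. Taking the union of the three conclusions gives exactly the characterisation claimed in $(i)$.

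For part $(ii)$, assume $1 < p < q < \infty$, so that $\frac{1}{p} - \frac{1}{q} > 0$. I would argue case by case in Theorem~\ref{thmIn1}. In Case $(i)$ no $\alpha$ works at all. In Case $(ii)$ the requirement $\alpha \geq \frac{1}{p} - \frac{1}{q} > 0$ rules out $\alpha = 0$. In Case $(iii)$ the hypothesis $\frac{1}{q} \geq \frac{k+2}{2k} - \frac{1}{kp}$ combined with $\frac{1}{p} > \frac{1}{q}$ forces $\frac{1}{p} > \frac{k+2}{2k} - \frac{1}{kp}$, so the strict requirement $\alpha > \frac{1}{p} - \bigl(\frac{k+2}{2k} - \frac{1}{kp}\bigr) > 0$ again excludes $\alpha = 0$.

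There is no serious obstacle: the corollary reduces to a routine check that $\alpha = 0$ lies (or does not lie) in each of the admissible ranges identified by Theorem~\ref{thmIn1}, and the only care required is in solving the piecewise-linear inequalities in $\frac{1}{p}$ (and $\frac{1}{q}$) that delimit the three regions of the theorem.
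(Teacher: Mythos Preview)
Your proposal is correct and follows exactly the approach the paper intends: the paper states only that the corollary is ``a direct consequence of Theorem~\ref{thmIn1}'' and gives no further argument, so your case-by-case specialization to $\alpha=0$ is precisely what is meant. Your handling of the borderline $p=\frac{2k+2}{k+2}$ in part~(i) (where the threshold in case~(iii) becomes $\alpha>0$ and so still excludes $\alpha=0$) and your observation in part~(ii) that $\frac{1}{p}>\frac{1}{q}$ forces the case~(iii) threshold to be strictly positive are the only points requiring care, and you address both correctly.
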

We remark that Corollary~\ref{cor1} $(i)$ has been proved  in  \cite[Theorem 1.2]{EdMcN16}. Finally, throughout the paper we use  $\lesssim$ and $\gtrsim$  to denote inequalities up to a positive multiplicative constant; and $\approx$ for the combination of $\lesssim$ and $\gtrsim$.

\section{Sufficient conditions}
We first recall the basic properties of the Bergman kernel associated to $\Om_k$. The Bergman kernel of $\Om_k$ can be computed explicitly as (see \cite{EdMcN16})
\[
K\left(z,w\right)=\dfrac{p_{k}\left(s\right)t^{2}+q_{k}\left(s\right)t+s^{k}p_{k}\left(s\right)}{k\pi^{2}\left(1-t\right)^{2}\left(t-s^{k}\right)^{2}}
\]
for $z=(z_1,z_2), w=(w_1,w_2)\in \Om_k$, where $s:=z_{1}\overline{w_{1}}$, $t:=z_{2}\overline{w_{2}}$, 
\[
p_{k}\left(s\right):=\sum_{j=1}^{k-1}j\left(k-j\right)s^{j-1}\text{ and }\;q_{k}\left(s\right):=\sum_{j=1}^{k}\left(j^{2}+\left(k-j\right)^{2}s^{k}\right)s^{j-1}.
\]
Here, we use the convention $\sum_{j=1}^{k-1}\cdots :=0$ if $k=1$. Since $|s^k|<|t|<1$, the Bergman kernel has the upper bound 
\begin{equation}
\left|K\left(z,w\right)\right|\lesssim\dfrac{\left|z_2\bar w_2\right|}{\left|1-z_2\bar w_2\right|^{2}\left|z_2\bar w_2-(z_1\bar w_1)^{k}\right|^{2}}.\label{eq:Pluq1}
\end{equation}
Combining the upper bound with the fact that $p_k(s)\ge 0 $ and $q_k(s)\ge 1$ for $s\in\mathbb R^+$, 
we obtain the ``sharp" estimate of the Bergman kernel on the diagonal
\begin{equation}
K\left(z,z\right)\thickapprox\dfrac{|z_2|^2}{\left(1-\left|z_{2}\right|^2\right)^{2}\left(\left|z_{2}\right|^2-\left|z_{1}\right|^{2k}\right)^{2}}\label{eq:Pluq2}
\end{equation}
for  $z=(z_1,z_2)\in \Om_k$.\\

Theorem~\ref{thmIn1} $(i)$ is covered in the following theorem. 
\begin{theorem}\label{thm 1} Let $q\ge \frac{2k+2}{k}$. Then $T_{K^{-\alpha}}$ cannot map from $L^p(\Om_k)$ to $A^q(\Om_k)$ continuously, for any $p> 0$ and $\alpha\in\mathbb R$. 
\end{theorem}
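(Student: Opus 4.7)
The plan is to exhibit an explicit test function whose image under $T_{K^{-\alpha}}$ is a non-zero scalar multiple of $z_2^{-1}$. Since
\[ \int_{\Om_k}|z_2|^{-q}\,dV\approx\int_0^1 r_2^{2/k+1-q}\,dr_2, \]
one has $z_2^{-1}\in A^q(\Om_k)$ if and only if $q<(2k+2)/k$; and because $\Om_k$ is bounded, $A^q\hookrightarrow A^{(2k+2)/k}$ for every $q\ge (2k+2)/k$, so it suffices to rule out boundedness $T_{K^{-\alpha}}\colon L^p(\Om_k)\to A^{(2k+2)/k}(\Om_k)$.

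Given $\alpha\in\mathbb R$ and $p>0$, pick any $\gamma\in\mathbb R$ with $\gamma<\min\{1/(2p),\,1/2+\alpha\}$ (such $\gamma$ always exists since the minimum is finite), and set $f(w):=K^\gamma(w,w)\,\bar w_2$. Using the sharp diagonal estimate \eqref{eq:Pluq2} and the substitution $u=r_1^{2k}$, the $w_1$-integral of $|f|^p$ reduces to a Beta integral $B(1/k,\,1-2\gamma p)$, which is finite because $\gamma p<1/2$; the remaining $r_2$-integral converges by the same constraint, and hence $f\in L^p(\Om_k)$.

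Next I would compute $T_{K^{-\alpha}}(f)$ using the Reinhardt structure of $\Om_k$. The monomials $z_1^a z_2^b$ with $a\ge 0$ and $b>-(a+1)/k-1$ form an orthogonal basis of $A^2(\Om_k)$, so
\[ K(z,w)=\sum_{(a,b)} c_{a,b}^{-1}\,z_1^a z_2^b\,\overline{w_1^a w_2^b}, \qquad c_{a,b}:=\|z_1^a z_2^b\|^2_{A^2(\Om_k)}. \]
Because $K^{\gamma-\alpha}(w,w)$ is radial, integration of $K(z,w)\bar w_2\,K^{\gamma-\alpha}(w,w)$ first in $\theta_1,\theta_2$ annihilates every Fourier mode except the one with $(a,b)=(0,-1)$, yielding
\[ T_{K^{-\alpha}}(f)(z)=\frac{z_2^{-1}}{c_{0,-1}}\int_{\Om_k}K^{\gamma-\alpha}(w,w)\,dV(w), \]
and the integral on the right is a finite positive constant because $\gamma-\alpha<1/2$. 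Hence $T_{K^{-\alpha}}(f)$ is a non-zero constant multiple of $z_2^{-1}\notin A^{(2k+2)/k}(\Om_k)$, forcing the desired unboundedness.

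The main technical point is justifying the termwise angular integration. For $z$ fixed in the interior of $\Om_k$ the kernel $|K(z,w)|$ is bounded in $w$, so absolute integrability of $K(z,w)\bar w_2\,K^{\gamma-\alpha}(w,w)$ reduces to the finiteness of $\int_{\Om_k}K^{\gamma-\alpha}(w,w)|w_2|\,dV$, yet another Beta-integral controlled by $\gamma-\alpha<1/2$; this validates Fubini and the interchange of summation and integration, and completes the argument.
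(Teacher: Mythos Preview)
Your approach is essentially the paper's: both test the operator on a radial multiple of $\bar w_2$ and use the Reinhardt symmetry to show the image is a nonzero multiple of $z_2^{-1}\notin A^q(\Om_k)$. The paper takes $f=\bar z_2$ after first reducing to $\alpha\ge 0$ (via the monotonicity $T_{K^{-\alpha}}f=T_{K^{-\alpha_0}}(K^{\alpha_0-\alpha}f)$ with $K^{\alpha_0-\alpha}$ bounded), whereas you absorb the $\alpha$-dependence into the test function through the factor $K^\gamma$. That is a harmless variation; the embedding $A^q\hookrightarrow A^{(2k+2)/k}$ you invoke is also unnecessary, since $z_2^{-1}\notin A^q$ directly for every $q\ge(2k+2)/k$.

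There is one genuine slip in the justification of the termwise integration. You assert that for $z$ fixed in the interior, $|K(z,w)|$ is bounded in $w$. This is false: for instance when $k=1$ one has $p_1\equiv 0$, $q_1\equiv 1$, so $K(z,w)=\big(\pi^2(1-z_2\bar w_2)^2(z_2\bar w_2-z_1\bar w_1)\big)^{-1}$, which blows up like $|w_2|^{-1}$ as $w\to 0$ along $w_1=0$; the same singularity at the origin persists for all $k$. Consequently your reduction of absolute integrability to finiteness of $\int_{\Om_k}K^{\gamma-\alpha}(w,w)|w_2|\,dV$ is not valid as stated. The fix is painless: simply strengthen the choice of $\gamma$ to $\gamma<\min\{1/(2p),\alpha\}$. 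Then $K^{\gamma-\alpha}$ is bounded on $\Om_k$ (since $K(w,w)\ge 1/\mathrm{vol}(\Om_k)$), so $K^{\gamma-\alpha}(w,w)\bar w_2\in L^\infty\subset L^2$, and the Bergman projection may be computed term by term in the orthonormal basis by ordinary $L^2$ theory, exactly as in the paper. With this adjustment your argument is complete.
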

\begin{proof}
Note that if $T_{K^{-\alpha_{0}}}$ is bounded from $L^{p}\left(\Omega_{k}\right)$
to $A^{q}\left(\Omega_{k}\right)$ for some $\alpha_{0}<0$ then $T_{K^{-\alpha}}$
maps from $L^{p}\left(\Omega_{k}\right)$ to $A^{q}\left(\Omega_{k}\right)$
continuously for any $\alpha\geq0$. Therefore, we may assume $\alpha\geq0$. In order to prove Theorem \ref{thm 1}, we shall show that $T_{K^{-\alpha}}(\bar z_2)=\frac{c}{z_2}$ for some non-zero constant $c$. Then the conclusion follows since $\bar z_2\in L^p(\Om_k)$ for any $p>0$, but $\frac{1}{z_2}\not\in L^q(\Om_k)$ if $q\ge \frac{2k+2}{k}$ (by a simple calculation). 

Now, we recall the fact (see \cite{EdMcN16}) 
\[
\left\{ z^{\beta}:\beta\in\mathcal{B}:=\left\{ \left(\beta_{1},\beta_{2}\right)\in\mathbb{Z}^{2}:\beta_{1}\geq0,\beta_{1}+k\left(\beta_{2}+1\right)>-1\right\} \right\} 
\]
forms an orthogonal basis for $A^{2}\left(\Omega_{k}\right)$. It
suffices to prove $\left\langle K^{-\alpha} \bar z_2,z^{\beta}\right\rangle =0$
for any $\beta\in\mathcal{B}$, apart from $\beta=\left(0,-1\right)$. To see this, observe that the function $\psi\left(z\right):=K^{-\alpha}\left(z,z\right)$
can be represented as $\psi\left(z\right)=\Phi\left(\left|z_{1}\right|,\left|z_{2}\right|\right)$,
for a bounded function $\Phi:\mathbb{R}\times\mathbb{R}\rightarrow\mathbb{R^{+}}$.
Therefore 
\begin{eqnarray*}
	\left\langle K^{-\alpha} \bar z_2,z^{\beta}\right\rangle  & = & \intop_{\Omega_{k}}\Phi\left(\left|z_{1}\right|,\left|z_{2}\right|\right)\overline{z_{2}}\overline{z_{1}^{\beta_{1}}z_{2}^{\beta_{2}}}dz\\
	& = & \left(\intop_{0}^{2\pi}e^{-\imath\theta_{1}\beta_{1}}d\theta_{1}\right)\left(\intop_{0}^{2\pi}e^{-\imath\theta_{2}\left(\beta_{2}+1\right)}d\theta_{2}\right)\left(\intop_{U}\Phi\left(r_{1},r_{2}\right)r_{1}^{\beta_{1}+1}r_{2}^{\beta_{2}+2}dr_1dr_2\right)\\
	& = & 0,
\end{eqnarray*}
unless $\beta=\left(0,-1\right)$. Here $U:=\left\{ \left(r_{1},r_{2}\right):0\leq r_{1},r_{1}^{k}<r_{2}<1\right\} $. This completes the proof of Theorem \ref{thm 1}.
\end{proof}

The next theorem is the main goal of this section, in which we prove  the  
sufficient conditions of $(ii)$ and $(iii)$ in Theorem~\ref{thmIn1}.   
\begin{theorem}\label{thm 2} Let $1<p\le q<2+\frac{2}{k}$. If $p$, $q$ and $\alpha$ satisfy either 
	\begin{enumerate}[(i)]
	\item $\frac{1}{q}<\frac{k+2}{2k}-\frac{1}{kp}$ and 
	$\alpha\ge \frac{1}{p}-\frac{1}{q}$, or 
	\item $\frac{1}{q}\geq\frac{k+2}{2k}-\frac{1}{kp}$ and  
	$\alpha> \frac{1}{p}-\left(\frac{k+2}{2k}-\frac1{kp}\right)$
\end{enumerate}
then   $T_{K^{-\alpha}}$ maps from $L^p(\Omega_{k})$ to $A^q(\Omega_{k})$ continuously.
	\end{theorem}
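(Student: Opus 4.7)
The approach is to apply the $L^p$--$L^q$ version of Schur's test to the positive integral operator $\mathcal T$ with kernel $\mathcal K(z,w):=|K(z,w)|\,K^{-\alpha}(w,w)$; since the Bergman kernel is holomorphic in $z$, an $L^p(\Om_k)\to L^q(\Om_k)$ bound for $\mathcal T$ automatically yields the stated mapping $L^p(\Om_k)\to A^q(\Om_k)$ for $T_{K^{-\alpha}}$. Combining the off-diagonal estimate \eqref{eq:Pluq1} with the diagonal estimate \eqref{eq:Pluq2} gives
\[
\mathcal K(z,w)\lesssim\frac{|z_2|\,|w_2|^{1-2\alpha}(1-|w_2|^{2})^{2\alpha}(|w_2|^{2}-|w_1|^{2k})^{2\alpha}}{|1-z_2\bar w_2|^{2}\,|z_2\bar w_2-(z_1\bar w_1)^{k}|^{2}}.
\]

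The natural test function for Schur is a product of powers of the two defining quantities of $\Om_k$ together with $|z_2|$,
\[
h(z)=|z_2|^{a}(1-|z_2|^{2})^{b}(|z_2|^{2}-|z_1|^{2k})^{c},
\]
with exponents $a,b,c$ to be tuned to $(p,q,\alpha,k)$. Plugging into the two Schur inequalities reduces matters to estimating integrals of the form
\[
\int_{\Om_k}\frac{|w_2|^{A}(1-|w_2|^{2})^{B}(|w_2|^{2}-|w_1|^{2k})^{C}}{|1-z_2\bar w_2|^{2}\,|z_2\bar w_2-(z_1\bar w_1)^{k}|^{2}}\,dV(w)
\]
and its symmetric counterpart in $z$, where $A,B,C$ are affine in $a,b,c,p,q,\alpha,k$.

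To evaluate these integrals I would pass to polar coordinates $w_j=\rho_j e^{i\varphi_j}$: the two angular integrals are handled by the Forelli--Rudin-type identity
\[
\int_0^{2\pi}\frac{d\theta}{|1-\mu e^{i\theta}|^{2s}}\approx(1-|\mu|)^{1-2s},\qquad s>\tfrac12,\;|\mu|\to 1^{-},
\]
and after the substitution $\rho_1=\rho_2^{1/k}u$ the radial part decouples into two one-dimensional Beta-type integrals in $u\in(0,1)$ and $\rho_2\in(0,1)$, whose asymptotics in $|z_1|,|z_2|$ can be read off explicitly.

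The main obstacle is to select $a,b,c$ so that the exponents of $(1-|z_2|^2)$, $(|z_2|^2-|z_1|^{2k})$ and $|z_2|$ produced by the above computation match $h(z)^{p'}$, $h(w)^{p'}$ (and the analogous powers $h(z)^q$, $h(w)^q$ in the dual inequality) while simultaneously guaranteeing convergence of every one-dimensional integral. The resulting system of linear equations and inequalities in $a,b,c$ is solvable precisely in the parameter range described by (i) and (ii): the closed bound $\alpha\ge 1/p-1/q$ in (i) corresponds to the non-borderline regime where the radial integrals converge with slack, whereas the strict inequality in (ii) is forced by the borderline behavior of those same integrals at the critical exponent; the upper constraint $q<2+2/k$ is exactly what is needed for the dual Schur integral to converge near $|z_2|=1$.
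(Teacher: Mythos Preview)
Your outline is essentially the paper's own strategy: the paper applies a generalised Schur test (its Lemma~\ref{Pro-thmIn3}) with auxiliary functions that are products of powers of $|z_2|$ and of $r(z)=(1-|z_2|^2)(|z_2|^2-|z_1|^{2k})$, and the integral bounds are obtained exactly by the change of variable $u=w_1^k/w_2$ followed by two applications of a Forelli--Rudin type estimate on the disc (the paper's Proposition~\ref{Pro-thmIn1} and Lemma~\ref{lmm1}). The paper in fact uses \emph{equal} exponents on the two factors of $r$, so only two free parameters $\beta,\gamma$ are needed rather than your three; the choice $\gamma=\tfrac{2}{q}-\tfrac{1}{p}$ handles case~(i) and $\gamma=-2\alpha+\tfrac{1}{p}$ handles case~(ii).

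Two points where your sketch should be tightened. First, the assertion ``the system of linear equations and inequalities in $a,b,c$ is solvable precisely in the parameter range'' is the entire content of the theorem; you must actually exhibit the parameters and check the inequalities, as the paper does. Second, your geometric explanation of the constraint $q<2+\tfrac{2}{k}$ is off: after the substitution $u=w_1^k/w_2$ the radial integrand picks up a factor $|w_2|^{2/k}$, and the condition $q<2+\tfrac{2}{k}$ is exactly what makes the resulting power of $|w_2|$ integrable \emph{at the origin} $w_2\to 0$ (the non-Lipschitz vertex of $\Omega_k$), not near $|z_2|=1$. The behaviour near $|z_2|=1$ is governed instead by the choice $-1<b<0$ in the Forelli--Rudin lemma, i.e.\ by $0<\beta<\min\{1/q,1/p'\}$.
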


One of the fundamental  tools to establish the self $L^p$ boundedness of the Bergman projection is Schur's test lemma (see \cite{McNSt94}, \cite{EdMcN16}). In our recent work \cite{KLT17}, Schur's test lemma has been generalised for studying the $L^{p}$-$L^{q}$ mapping property of Toeplitz operators.

\begin{lemma}\label{Pro-thmIn3}
\cite[Theorem 5.1]{KLT17}
Let $\left(X,\mu\right)$ and $\left(Y,\nu\right)$ be measure spaces
with $\sigma$-finite, positive measures; let $1<p\leq q<\infty$
and $\eta\in\mathbb{R}.$ Let $K:X\times Y\rightarrow\mathbb{C}$
and $\psi:Y\rightarrow\mathbb{C}$ be measurable functions. Assume
that there exist positive measurable functions $h_{1},h_{2}$ on $Y$
and $g$ on $X$ such that 
\[
h_{1}^{-1}h_{2}\psi\in L^{\infty}\left(Y,d\nu\right)
\]
and the inequalities 
\begin{eqnarray}\label{S est} 
\intop_{Y}\left|K\left(x,y\right)\right|^{\eta p'}h_{1}^{p'}\left(y\right)d\nu\left(y\right)  &\leq&  C_{1}g^{p'}\left(x\right), \nonumber \\
\intop_{X}\left|K\left(x,y\right)\right|^{\left(1-\eta\right)q}g^{q}\left(x\right)d\mu\left(x\right)  &\leq&  C_{2}h_{2}^{q}\left(y\right), 
\end{eqnarray}
hold for almost every $x\in\left(X,\mu\right)$ and $y\in\left(Y,\nu\right),$
where $\frac{1}{p}+\frac{1}{p'}=1$ and $C_{1},C_{2}$ are positive
constants.

Then, the Toeplitz operator $T_{\psi}$ associated to the kernel $K$
and the symbol $\psi$ defined by 
\[
\left(T_{\psi}f\right)\left(x\right):=\intop_{Y}K\left(x,y\right)f\left(y\right)\psi\left(y\right)d\nu\left(y\right),
\]
is bounded from $L^{p}\left(Y,\nu\right)$ onto $L^{q}\left(X,\mu\right).$
Furthermore,
\[
\left\Vert T_{\psi}\right\Vert _{L^{p}\left(Y,\nu\right)\rightarrow L^{q}\left(X,\mu\right)}\leq C_{1}^{\frac{p-1}{p}}C_{2}^{\frac{1}{q}}\left\Vert h_{1}^{-1}h_{2}\psi\right\Vert _{L^{\infty}\left(Y,\nu\right)}.
\]

\end{lemma}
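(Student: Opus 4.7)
The plan is to apply the generalized Schur test (Lemma~\ref{Pro-thmIn3}) with test functions tailored to the boundary behaviour of the Bergman kernel on $\Omega_k$. Exploiting the Reinhardt symmetry and the diagonal estimate \eqref{eq:Pluq2}, I would choose
\[
h_1(w)=(1-|w_2|^2)^{a_1}\bigl(|w_2|^2-|w_1|^{2k}\bigr)^{b_1},\quad
h_2(w)=(1-|w_2|^2)^{a_2}\bigl(|w_2|^2-|w_1|^{2k}\bigr)^{b_2},
\]
\[
g(z)=(1-|z_2|^2)^{a_3}\bigl(|z_2|^2-|z_1|^{2k}\bigr)^{b_3},
\]
and a free parameter $\eta\in(0,1)$. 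Since $\psi(w)=K^{-\alpha}(w,w)\approx |w_2|^{-2\alpha}(1-|w_2|^2)^{2\alpha}(|w_2|^2-|w_1|^{2k})^{2\alpha}$ and $|w_2|$ is bounded on $\Omega_k$, the $L^\infty$ constraint $\|h_1^{-1}h_2\psi\|_\infty<\infty$ reduces to two linear conditions: $a_2-a_1+2\alpha\ge 0$ and $b_2-b_1+2\alpha\ge 0$.

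The core of the proof is to verify the two integral inequalities in \eqref{S est}, which after applying the upper bound \eqref{eq:Pluq1} become Forelli--Rudin type integrals on $\Omega_k$. The plan is to compute these in polar coordinates $w_j=r_je^{i\theta_j}$, integrating successively in $\theta_1$, $r_1$, $\theta_2$, $r_2$. The $\theta_1$-integral is handled by the standard one-variable bound $\int_0^{2\pi}|1-\rho e^{i\theta}|^{-2t}\,d\theta\approx(1-\rho)^{1-2t}$ applied with $\rho=|z_1|^k r_1^k/(|z_2|r_2)$; the $\theta_2$-integral is similar with $\rho=|z_2|r_2$; the radial integrals reduce, after the substitutions $u=r_1^k/r_2$ and $v=r_2$, to standard beta-type estimates of the form $\int_0^1 u^A(1-u)^B(1-\xi u)^{-C}\,du$. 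Chaining these yields an estimate
\[
\int_{\Omega_k}|K(z,w)|^{\eta p'}h_1^{p'}(w)\,dV(w)\lesssim (1-|z_2|^2)^{A(\eta,a_1,p)}\bigl(|z_2|^2-|z_1|^{2k}\bigr)^{B(\eta,b_1,p)},
\]
together with the analogous formula for the second integral with $(\eta,h_1,p')$ replaced by $(1-\eta,g,q)$. Matching the resulting exponents with $g^{p'}(z)$ and $h_2^q(w)$ produces a closed linear system in $a_j,b_j$ whose solvability (subject to the convergence thresholds $\eta p'>\frac{1}{2}$, $(1-\eta)q>\frac{1}{2}$, and the beta-integrability conditions) translates exactly into the algebraic restrictions on $(p,q,\alpha)$ stated in (i) and (ii), with the threshold $q<\frac{2k+2}{k}$ coming from the most delicate Forelli--Rudin exponent needing to stay integrable.

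The main obstacle, and the reason the theorem splits into two cases, is that the optimal exponents for the $(|z_2|^2-|z_1|^{2k})$-factor and the $(1-|z_2|^2)$-factor are controlled by \emph{different} radial integrals whose convergence thresholds lie on opposite sides of the line $\frac{1}{q}=\frac{k+2}{2k}-\frac{1}{kp}$. In regime (i) both thresholds are strict and one may saturate the relation $\alpha=\tfrac{1}{p}-\tfrac{1}{q}$ (the usual Hölder-scaling exponent for the symbol), while in regime (ii) one of the Forelli--Rudin integrals sits on its boundary of convergence and must be avoided by shifting $\alpha$ strictly above $\tfrac{1}{p}-(\tfrac{k+2}{2k}-\tfrac{1}{kp})$; correspondingly the parameter $\eta$ must be tuned near, but not equal to, this critical value. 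Carrying out this parameter bookkeeping carefully (and separately choosing $\eta$ in each regime) and verifying that the system of inequalities is consistent exactly in the claimed range is the most delicate step.
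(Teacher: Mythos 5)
Your proposal does not prove the statement in question. Lemma~\ref{Pro-thmIn3} is an abstract, purely measure-theoretic assertion about arbitrary $\sigma$-finite measure spaces $\left(X,\mu\right)$ and $\left(Y,\nu\right)$, an arbitrary kernel $K$ and symbol $\psi$, and auxiliary functions $h_1,h_2,g$ satisfying the two integral hypotheses in \eqref{S est}; nothing in it refers to the Bergman kernel, to $\Omega_k$, or to the parameter $\alpha$. What you have written is instead a strategy for \emph{applying} this lemma to the fat Hartogs triangle --- choosing $h_1,h_2,g$ as powers of $\left(1-\left|w_2\right|^2\right)$ and $\left(\left|w_2\right|^2-\left|w_1\right|^{2k}\right)$, estimating Forelli--Rudin type integrals, and matching exponents. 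That is essentially the content of the paper's proof of Theorem~\ref{thm 2} (via Proposition~\ref{Pro-thmIn1} and Lemma~\ref{lmm1}), not a proof of the Schur-test lemma itself. Taking the lemma as your starting point is circular relative to the task: the statement you were asked to establish is the very tool you invoke in your first sentence.

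The missing argument is the standard generalized Schur/H\"older scheme. For $f\in L^{p}\left(Y,\nu\right)$ one writes $\left|K\right|=\left|K\right|^{\eta}h_{1}\cdot\left|K\right|^{1-\eta}h_{1}^{-1}$ inside $T_{\psi}f\left(x\right)$ and applies H\"older's inequality with exponents $p'$ and $p$; the first hypothesis in \eqref{S est} bounds the $p'$-factor by $C_{1}^{1/p'}g\left(x\right)$. Raising to the power $q$, integrating over $X$, and using Minkowski's integral inequality (valid since $q/p\geq1$) to interchange the order of integration, one reduces to the inner integral $\intop_{X}\left|K\left(x,y\right)\right|^{\left(1-\eta\right)q}g^{q}\left(x\right)d\mu\left(x\right)$, which the second hypothesis bounds by $C_{2}h_{2}^{q}\left(y\right)$; the remaining factor $h_{1}^{-1}h_{2}\left|\psi\right|$ is controlled by its $L^{\infty}$ norm, yielding exactly the stated operator-norm bound $C_{1}^{\frac{p-1}{p}}C_{2}^{\frac{1}{q}}\left\Vert h_{1}^{-1}h_{2}\psi\right\Vert _{L^{\infty}}$. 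None of these steps appear in your proposal, so as a proof of Lemma~\ref{Pro-thmIn3} it has a complete gap.
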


In order to employ Lemma~\ref{Pro-thmIn3}, we first establish integral estimates on the Bergman kernel of the fat Hartogs triangle $\Om_k$.   

\begin{proposition}\label{Pro-thmIn1}
	Let $a,b,c$ be real numbers satisfying  
\begin{equation}\label{m1}
a\ge 1, \quad -1<b<0\quad \text{ and }\quad  -a+2b+c+\frac{2}{k}>-2.
\end{equation}
	Then for any $z\in\Omega_{k},$ 
	\begin{eqnarray}
	\intop_{\Omega_{k}}\left|K\left(z,w\right)\right|^{a}|r(w)|^b|w_2|^c dV(w) & \lesssim & K^{a-1}\left(z,z\right)|r(z)|^b |z_2|^{a-2b-2 },  \label{eq:Le31}
	\end{eqnarray}
	where $r\left(z\right):=\left(\left|z_{2}\right|^{2}-\left|z_{1}\right|^{2k}\right)\left(\left|z_{2}\right|^{2}-1\right)$.
\end{proposition}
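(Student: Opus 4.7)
The plan is to bound $|K(z,w)|$ by \eqref{eq:Pluq1}, integrate over the two angular variables using Forelli--Rudin type estimates on the unit disc, and then reduce the remaining radial integral to a product of two classical beta-type integrals via the substitutions $u=\rho_1^{2k}$, $v=\rho_2^{2}$ followed by the rescaling $u=v\tilde u$. In view of \eqref{eq:Pluq2}, the right hand side of \eqref{eq:Le31} is, up to a constant, $|z_2|^{3a-2b-4}|r(z)|^{b-2a+2}$, so the task is exactly to produce these exponents.

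Writing $w_j=\rho_j e^{i\theta_j}$, the angle $\theta_2$ enters only through $e^{-i\theta_2}$, while $\theta_1$ enters only through $e^{-ik\theta_1}$. By $2\pi/k$-periodicity, the $\theta_1$-integral collapses, after the substitution $\psi=-k\theta_1$, to the single-circle integral $\int_0^{2\pi}|z_2\bar w_2-z_1^k\rho_1^k e^{i\psi}|^{-2a}\,d\psi$; since $|z_2\bar w_2|>|z_1\bar w_1|^k$ on $\Omega_k$ and $a\ge 1>1/2$, the angular Forelli--Rudin estimate yields
\[
\int_0^{2\pi}\frac{d\theta_1}{|z_2\bar w_2-(z_1\bar w_1)^k|^{2a}}\;\approx\;\frac{(|z_2|\rho_2)^{2a-2}}{((|z_2|\rho_2)^2-(|z_1|\rho_1)^{2k})^{2a-1}},
\]
and analogously $\int_0^{2\pi}|1-z_2\bar w_2|^{-2a}\,d\theta_2\approx (1-|z_2|^2\rho_2^2)^{-(2a-1)}$. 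After these two steps, the radial part separates, upon the substitutions above, as $I_U\cdot I_V$ where
\[
I_U=\int_0^1\!\frac{\tilde u^{1/k-1}(1-\tilde u)^b}{(|z_2|^2-|z_1|^{2k}\tilde u)^{2a-1}}\,d\tilde u,\qquad I_V=\int_0^1\!\frac{v^{(-a+2b+c+2/k)/2}(1-v)^b}{(1-|z_2|^2 v)^{2a-1}}\,dv.
\]

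The three hypotheses in \eqref{m1} enter precisely at this step. The condition $-a+2b+c+2/k>-2$ forces the $v$-exponent above $-1$; $b>-1$ ensures integrability at $v=1$ and $\tilde u=1$; and $a\ge 1$ together with $b<0$ gives $2a-1>b+1$, which is exactly the regime in which the one-variable Forelli--Rudin estimate
\[
\int_0^1\frac{v^{s}(1-v)^b}{(1-x v)^{2a-1}}\,dv\;\approx\;(1-x)^{b+2-2a},\qquad x\in[0,1),
\]
applies. This yields $I_V\approx (1-|z_2|^2)^{b+2-2a}$; after pulling $|z_2|^{-2(2a-1)}$ out of $I_U$ and applying the same estimate with $x=|z_1|^{2k}/|z_2|^2$, one obtains $I_U\approx|z_2|^{-2b-2}(|z_2|^2-|z_1|^{2k})^{b+2-2a}$. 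Combined with the prefactor $|z_2|^{3a-2}$ produced by the angular integration, these reproduce exactly $|z_2|^{3a-2b-4}|r(z)|^{b-2a+2}$.

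The main obstacle I foresee is the bookkeeping of the $|z_2|$-powers: \eqref{eq:Pluq1} is only one-sided, and only after the cancellation between the $|z_2|^{a}$ it contributes, the $|z_2|^{2a-2}$ produced by the angular step, and the $|z_2|^{-2b-2}$ hidden in $I_U$ does the sharp exponent $3a-2b-4$ emerge. A secondary subtlety is justifying the $k$-fold collapse of the $\theta_1$-integral, which rests on the $2\pi/k$-periodicity of the integrand and must be carried out before one invokes Forelli--Rudin on the disc.
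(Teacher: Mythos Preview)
Your proposal is correct and follows essentially the same route as the paper. The only cosmetic difference is in packaging: the paper keeps the complex variables intact, substitutes $u=w_1^k/w_2$ to map the inner $w_1$-integral to the unit disc, and then invokes the two-dimensional disc estimate (Lemma~\ref{lmm1}) twice; you instead split into polar coordinates, carry out the angular Forelli--Rudin step explicitly, and then apply the one-dimensional radial version to the resulting $I_U$ and $I_V$. Since Lemma~\ref{lmm1} is itself proved by exactly this angular-then-radial decomposition, the two arguments are the same computation organised in different orders, and your exponent bookkeeping (in particular the emergence of $|z_2|^{3a-2b-4}$) matches the paper's.
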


\begin{proof}\label{Proof of Pro-thmIn1}
Set $J(z):=\intop_{\Omega_{k}}\left|K\left(z,w\right)\right|^{a}|r\left(w\right)|^b|w_2|^cdV(w)$. By \eqref{eq:Pluq1}, it follows
 
	\begin{eqnarray*}
		J(z) & \lesssim & \intop_{\Omega_{k}}\dfrac{\left|z_{2}\overline{w_{2}}\right|^{a}\left(\left|w_{2}\right|^{2}-\left|w_{1}\right|^{2k}\right)^{b}\left(1-\left|w_{2}\right|^{2}\right)^{b}|w_2|^c}{\left|1-z_{2}\overline{w_{2}}\right|^{2a}\left|z_{2}\overline{w_{2}}-z_{1}^{k}\overline{w_{1}}^{k}\right|^{2a}}dV(w)\\
		&= & \intop_{D\setminus \{0\}}\dfrac{\left|z_{2}\right|^a\left|w_2\right|^{a+c}\left(1-\left|w_{2}\right|^{2}\right)^{b}}{\left|1-z_{2}\overline{w_{2}}\right|^{2a}}\left[\intop_{D(|w_2|^{\frac1k})}\dfrac{\left(\left|w_{2}\right|^{2}-\left|w_{1}\right|^{2k}\right)^{b}}{\left|z_{2}\overline{w_{2}}-z_{1}^{k}\overline{w_{1}}^{k}\right|^{2a}}dV(w_{1})\right]dV(w_{2}),
	\end{eqnarray*}
	where $D$ is the open unit disk in $\C$ 
	and $D(|w_2|^{\frac{1}{k}}):=\left\{ w_{1}\in\mathbb{C}:\left|w_{1}\right|<\left|w_{2}\right|^{\frac1k}\right\}$. By the change of variable $u:=\frac{w_1^k}{w_2}$, the expression in the bracket $\left[\;\right]$ can be rewritten as 
	\begin{equation}\label{b1}
	\begin{split}
		[\cdots] = & \left|w_{2}\right|^{-2a+2b}\left|z_{2}\right|^{-2a}\intop_{D(|w_2|^{\frac{1}{k}})}\left|1-\dfrac{z_{1}^{k}\overline{w_{1}}^{k}}{z_{2}\overline{w_{2}}}\right|^{-2a}\left(1-\left|\dfrac{w_{1}^{k}}{w_{2}}\right|^{2}\right)^{b}dV(w_{1})\\
	 =& \frac{1}{k^2}\left|w_{2}\right|^{-2a+2b+\frac{2}{k}}\left|z_{2}\right|^{-2a}\intop_{D}\left|1-\dfrac{\overline{z_{1}}^{k}}{\overline{z_{2}}} u\right|^{-2a}\left(1-\left|u\right|^{2}\right)^{b}\left|u\right|^{\frac2k-2}dV(u).
	 \end{split}\end{equation}
By Lemma~\ref{lmm1} below, the integral term in the last line of \eqref{b1} is dominated by $\left(1-\left|\frac{z_1^k}{z_2}\right|^2\right)^{-2a+b+2}$. Thus, \eqref{b1} continues as 
	\[
	[\cdots]\lesssim\left|w_{2}\right|^{-2a+2b+\frac2k}\left|z_{2}\right|^{-2a}\left(1-\left|\dfrac{z_{1}^{k}}{z_{2}}\right|^{2}\right)^{-2a+b+2}.
	\]
Therefore, 
	\begin{equation*}\label{a1}
	\begin{split}
		J(z) \lesssim & \left|z_{2}\right|^{3a-2b-4}\left(\left|z_{2}\right|^{2}-\left|z_{1}\right|^{2k}\right)^{-2a+b+2}\intop_{D\setminus\{0\}}\left|1-w_{2}\overline{z_{2}}\right|^{-2a}\left(1-\left|w_{2}\right|^{2}\right)^{b}\left|w_{2}\right|^{-a+2b+c+\frac{2}{k}}dV(w_{2})\\
		 \lesssim & \left|z_{2}\right|^{3a-2b-4}\left(\left|z_{2}\right|^{2}-\left|z_{1}\right|^{2k}\right)^{-2a+b+2}\left(1-\left|z_{2}\right|^{2}\right)^{-2a+b+2}\\
		\lesssim & K^{a-1}\left(z,z\right) |r(z)|^b\left|z_{2}\right|^{a-2b-2},
\end{split}	\end{equation*}
where the second inequality follows by  using Lemma~\ref{lmm1} again since $-a+2b+c+\frac{2}{k}>-2$; and the last one follows by \eqref{eq:Pluq2}.
\end{proof}
The proof of Proposition \ref{Pro-thmIn1} is complete but we have skipped a crucial technical point that we face now. 
\begin{lemma}\label{lmm1}
	Let $a$, $b$ and $c$ be real numbers such that 
$$a\ge 1,\quad 	-1<b<0\quad  \text{and }\quad  c>-2.$$
	Then,  for any $v\in D$, 
	\begin{equation}\label{Prof-thmIn2-1}
	I_{a,b,c}(v):=\intop_{D}\left|1-u\bar v \right|^{-2a}\left(1-\left|u\right|^{2}\right)^{b} \left|u\right|^{c }dV(u)\lesssim\left(1-\left|v\right|^{2}\right)^{-2a+b+2}.
	\end{equation}
\end{lemma}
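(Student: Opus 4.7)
The plan is to reduce \eqref{Prof-thmIn2-1} to the classical Forelli--Rudin integral estimate on the unit disk, with the factor $|u|^{c}$ absorbed by a simple splitting of the domain.

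First, I would split $D=D_{1}\cup D_{2}$ where $D_{1}=\{u\in D:|u|\le 1/2\}$ and $D_{2}=\{u\in D:|u|>1/2\}$. On $D_{1}$ both $|1-u\bar v|^{-2a}$ and $(1-|u|^{2})^{b}$ are bounded above by absolute constants (since $|u\bar v|\le 1/2$ and $1-|u|^{2}\ge 3/4$), so
\[
\intop_{D_{1}}\left|1-u\bar v\right|^{-2a}(1-|u|^{2})^{b}|u|^{c}\,dV(u)\lesssim \intop_{0}^{1/2} r^{c+1}\,dr,
\]
which is finite precisely because $c>-2$. The hypotheses $a\ge 1$ and $b<0$ force $-2a+b+2<0$, so $(1-|v|^{2})^{-2a+b+2}\ge 1$, and the $D_{1}$-contribution is automatically $\lesssim (1-|v|^{2})^{-2a+b+2}$.

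On $D_{2}$ the factor $|u|^{c}$ is bounded above by a constant (since $|u|$ is bounded away from $0$), so it suffices to estimate $\intop_{D}(1-|u|^{2})^{b}|1-u\bar v|^{-2a}\,dV(u)$. Under the present hypotheses we have $b>-1$ and $2a-b-2>0$ (the latter follows from $a\ge 1$ together with $b<0$, giving $2a-b-2\ge -b>0$), which places us in the classical Forelli--Rudin regime, and I would invoke the known estimate $\approx (1-|v|^{2})^{-2a+b+2}$. For a self-contained derivation, one would pass to polar coordinates $u=re^{\imath\theta}$, apply the classical one-dimensional bound $\intop_{0}^{2\pi}|1-re^{\imath\theta}\bar v|^{-2a}\,d\theta\approx (1-r|v|)^{1-2a}$, and then reduce to a beta-type integral in $r$ that yields the claimed power of $1-|v|^{2}$.

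I do not foresee any substantive obstacle. The two points requiring care are (a) noting that the exponent $-2a+b+2$ is negative, which is what makes the bounded contribution from $D_{1}$ harmless, and (b) verifying that the assumption $c>-2$ is exactly what is needed for integrability of $|u|^{c}$ near $u=0$; both are immediate from the hypotheses of the lemma.
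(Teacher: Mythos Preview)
Your argument is correct. The splitting into $D_{1}=\{|u|\le 1/2\}$ and $D_{2}=\{|u|>1/2\}$ disposes of the weight $|u|^{c}$ cleanly, and on $D_{2}$ you are indeed in the standard Forelli--Rudin setting with $b>-1$ and $2a-b-2>0$; your observation that $-2a+b+2<0$ is exactly what makes the bounded $D_{1}$-piece harmless.

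The paper's proof takes a different reduction. Rather than splitting the domain to eliminate $|u|^{c}$, it keeps $|u|^{c}$ throughout and instead reduces from general $a\ge 1$ to the case $a=1$ via the trivial bound $|1-u\bar v|^{2-2a}\le (1-|v|)^{2-2a}\lesssim (1-|v|^{2})^{2-2a}$, which pulls a factor $(1-|v|^{2})^{2-2a}$ outside the integral and leaves $I_{1,b,c}(v)$. That remaining integral, with the $|u|^{c}$ factor still present, is then cited from Edholm--McNeal \cite[Lemma~3.2]{EdMcN16}, giving $I_{1,b,c}(v)\lesssim (1-|v|^{2})^{b}$. In short: the paper reduces in the parameter $a$ and quotes a weighted $a=1$ estimate, whereas you reduce in the weight $|u|^{c}$ and quote the unweighted Forelli--Rudin estimate for general $a$. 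Both routes are standard; yours is a bit more self-contained, while the paper's is a one-line reduction to an existing lemma in the literature.
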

\begin{proof}
This lemma  is a slight extension of \cite[Lemma 3.2]{EdMcN16},  in which the estimate 
\begin{equation}
\label{cc1}
I_{1,b,c}(v)\lesssim (1-|v|^2)^{b}
\end{equation}
has been proved for $-1<b<0$ and $-2<c\le 0$. The estimate \eqref{cc1} can be automatically extended to the case $c>-2$. Now,  if $a\ge 1$ then 
$$|1-u\bar v|^{2-2a}\le (1-|v|)^{2-2a}\lesssim (1-|v|^2)^{2-2a};$$
and hence by \eqref{cc1}
$$ I_{a,b,c}(v)\lesssim(1-|v|^2)^{2-2a} I_{1,b,c}(v)\lesssim (1-|v|^2)^{-2a+b+2},
$$
provided $-1<b<0$ and $c>-2$. 
\end{proof}

Now we are ready to prove Theorem~\ref{thm 2}.

\begin{proof}[Proof of Theorem~\ref{thm 2}] Let $p'=\frac{p}{p-1}$, $0<\beta<\min\{\frac{1}{q}, \frac{1}{p'}\}$ and $\gamma<(1+\frac{2}{k})(1-\frac{1}{p})$. Combining the choice of $p', \beta,\gamma$ and the hypothesis  $1<p\le q<2+\frac{2}{k}$, it is clear that the relations  
	$$a\ge 1,\quad -1<b<0\quad \text{and}\quad  -a+2b+c+\frac{2}{k}>-2$$ satisfy 
	for both choices   $$(a,b,c)=(1,-\beta p',(2\beta-\gamma)p')\quad\text{and}\quad (a,b,c)=\left(\frac{q}{p},-\beta q, (2\beta -\frac{1}{p'})q\right).$$ 
	Thus, by Proposition~\ref{Pro-thmIn1}, the following integral estimates hold
 \begin{eqnarray} \intop_{\Omega_{k}}\left|K\left(z,w\right)\right||r(w)|^{-\beta p'}|w_2|^{(2\beta-\gamma)p'}dV(w) & \lesssim&  |r(z)|^{-\beta p'}|z_2|^{2\beta p'-1},\label{eq:MaT1}\\ \intop_{\Omega_{k}}\left|K\left(z,w\right)\right|^{\frac{q}{p}}|r(z)|^{-\beta q}|z_2|^{\frac{(2\beta p'-1)q}{p'}}dV(z) & \lesssim & K^{\frac{q}{p}-1}\left(w,w\right)|r(w)|^{-\beta q}|w_2|^{\frac{q}{p}+2\beta q-2}.\label{eq:MaT3} \end{eqnarray}
 
 This can be read that the integral estimates in Lemma~\ref{Pro-thmIn3} hold with $\eta=\frac{1}{p'}$,  $$h_1(w):=|r(w)|^{-\beta}|w_2|^{2\beta-\gamma},\quad g(z):=|r(z)|^{-\beta}|z_2|^{2\beta-\frac{1}{p'}}$$
  and $$h_2(w):=K^{\frac{1}{p}-\frac{1}{q}}(w,w)|r(w)|^{-\beta}|w_2|^{\frac{1}{p}+2\beta-\frac{2}{q}}.$$
In order to conclude that the Bergman-Toeplitz operator $T_{K^{-\alpha}}:L^p(\Om_k)\to A^q(\Om_k)$ continuously, we shall choose $\gamma<\left(1+\frac{2}{k}\right)\left(1-\frac{1}{p}\right)$ such that 
$$\mathcal A(w):=h_1^{-1}(w)h_2(w)K^{-\alpha}(w,w)=K^{\frac1p-\frac1q-\alpha}(w,w) |w_2|^{\gamma+\frac{1}{p}-\frac{2}{q}}$$
is uniformly bounded for all $w\in\Om_k$. \\
 
If $\frac{1}{q}<\frac{k+2}{2k}-\frac1{kp}$ and $\alpha\ge \frac{1}{p}-\frac{1}{q}$, by choosing $\gamma=\frac2q-\frac1p$, we have $\mathcal A(w)\le 1$ for all $w\in \Om_k$. Moreover, with this choice, the requirement $\gamma<(1+\frac{2}{k})(1-\frac{1}{p})$ is equivalent to the given condition $\frac{1}{q}<\frac{k+2}{2k}-\frac1{kp}$. This proves the first part of  Theorem~\ref{thm 2}.

If $\frac{1}{q}\ge \frac{k+2}{2k}-\frac{1}{kp}$ and $\alpha>\frac{1}{p}-\left( \frac{k+2}{2k}-\frac{1}{kp}\right)$ then 
$$\mathcal A(w)\lesssim |w_2|^{-\frac2p+\frac2q+2\alpha+\gamma+\frac1p-\frac2q}=|w_2|^{2\alpha+\gamma-\frac1p}.$$
Here, we have used the fact that $\alpha>\frac{1}{p}-\left( \frac{k+2}{2k}-\frac{1}{kp}\right)\ge \frac{1}{p}-\frac1q$ and  $K^{-1}(w,w)\lesssim |w_2|^2$ by \eqref{eq:Pluq2}. 
Now by choosing $\gamma=-2\alpha+\frac1p$, we have $\mathcal A$ is uniformly bounded. The proof is complete since the requirement  $\gamma<(1+\frac{2}{k})(1-\frac1p)$ is equivalent to  $\alpha>\frac{1}{p}-\left( \frac{k+2}{2k}-\frac{1}{kp}\right)$.

\end{proof}

\section{Necessary conditions}

To complete the proof of Theorem \ref{thmIn1}, the remaining task is to show the respective lower bounds for $\alpha$. That is, if $T_{K^{-\alpha}}$ maps from $L^{p}\left(\Omega_{k}\right)$ to $A^{q}\left(\Omega_{k}\right)$ continuously, then $\alpha$ must be greater than or
equal to the desired values. 
We shall prove parts $(ii)$ and $(iii)$ in Theorem \ref{thmIn1} by using two different approaches, respectively.
We remark that the
underlying idea of both arguments is to derive the desirable property
from singular points. \\

In the first approach, we illustrate a technique involving the use of the pluricomplex Green function. This method may be applied to a more general context. We first recall that 
for  a bounded pseudoconvex domain  $\Omega\subset \mathbb{C}^{n}$, 
the pluricomplex Green function with a pole $w\in\Omega$ is defined
by 
\[
G\left(\cdot,w\right):=\sup\left\{ u\left(\cdot\right):u\in PSH^{-}\left(\Omega\right),\limsup_{z\rightarrow w}\left(u\left(z\right)-\log\left|z-w\right|\right)<\infty\right\}. 
\]
Here $PSH^{-}\left(\Omega\right)$ denotes the set of all negative plurisubharmonic functions in $\Omega$. The relation between the pluricomplex Green function and the Bergman kernel has been studied by several authors, see e.g. \cite{Her99,Blo05,ChFu11,Blo14}. One of the most important facts from these results is the very weak assumption on the regularity of the domain.  We refer the reader to \cite[Chapter 6]{Kli91} for an introduction of the pluricomplex Green function.\\

The following proposition is first proved by Herbort \cite{Her99} with the constant on the right hand side depending on the diameter of the domain,  and B{\l}ocki \cite{Blok14} improved it to the sharp one as follows.

\begin{proposition}[Herbort-B{\l}ocki]\label{HER}
	Let $\Omega$ be a bounded pseudoconvex domain in $\mathbb C^n$ and let $t$ be any positive
	number. Then for any holomorphic function $f$ on $\Omega$ and any $w\in\Omega$, 
	\[
	\intop_{\left\{ G\left(\cdot,w\right)<-t\right\} }\left|f\left(z\right)\right|^{2}dz\geq e^{-2nt}\dfrac{\left|f\left(w\right)\right|^{2}}{K\left(w,w\right)}.
	\]
\end{proposition}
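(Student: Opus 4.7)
The plan is to recast the asserted inequality as a comparison between the on-diagonal Bergman kernel of $\Omega$ and that of the sublevel set
\[
\Omega_{t}:=\{z\in\Omega: G(z,w)<-t\},
\]
and to establish that comparison via the sharp Ohsawa--Takegoshi $L^{2}$-extension theorem. The set $\Omega_{t}$ is a bounded pseudoconvex open subset of $\Omega$ (possibly disconnected), being a sublevel set of the plurisubharmonic function $G(\cdot,w)$ on the pseudoconvex domain $\Omega$; consequently it carries its own Bergman kernel $K_{\Omega_{t}}$ with the usual reproducing property on $A^{2}(\Omega_{t})$.

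First I would perform a reduction by the reproducing property. Let $f$ be holomorphic on $\Omega$ with $\intop_{\Omega_{t}}|f|^{2}\,dV<\infty$ (otherwise there is nothing to prove), so that $f|_{\Omega_{t}}\in A^{2}(\Omega_{t})$. Reproducing the value $f(w)$ against $K_{\Omega_{t}}(\cdot,w)$ and applying Cauchy--Schwarz gives
\[
|f(w)|^{2}\;=\;\left|\intop_{\Omega_{t}}f(z)\,\overline{K_{\Omega_{t}}(z,w)}\,dV(z)\right|^{2}\;\le\;K_{\Omega_{t}}(w,w)\intop_{\Omega_{t}}|f|^{2}\,dV,
\]
so the asserted inequality reduces to the Bergman kernel comparison $K_{\Omega_{t}}(w,w)\le e^{2nt}K(w,w)$.

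The second, and essential, step is to prove this comparison. By the extremal characterization of $K(w,w)$, it suffices to show that for every $h\in A^{2}(\Omega_{t})$ there exists $H\in A^{2}(\Omega)$ with $H(w)=h(w)$ and $\|H\|_{L^{2}(\Omega)}^{2}\le e^{2nt}\|h\|_{L^{2}(\Omega_{t})}^{2}$; indeed, substituting into the extremal inequality $K(w,w)\ge|H(w)|^{2}/\|H\|_{L^{2}(\Omega)}^{2}$ and taking the supremum over $h$ gives the desired bound. Such an $H$ is constructed by a $\bar\partial$-argument: truncate via $\tilde h:=\chi(G(\cdot,w))h$, where $\chi$ is a smooth cutoff supported in $(-\infty,-t)$ and equal to $1$ on $(-\infty,-t-\epsilon]$, then solve $\bar\partial u=\bar\partial\tilde h$ by Hörmander's weighted $L^{2}$-estimate using a plurisubharmonic weight modelled on $2n\max\{G(\cdot,w)+t,0\}$ (made sufficiently singular at $w$ so that necessarily $u(w)=0$). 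The function $H:=\tilde h-u$ is then holomorphic on $\Omega$ with $H(w)=h(w)$, and the factor $e^{2nt}$ arises from the weight, since $e^{-\varphi}\equiv 1$ on $\Omega_{t}$ and decays at rate $e^{-2nt}$ on the transition region supporting $\bar\partial\tilde h$.

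The main obstacle is securing the \emph{sharp} exponential constant $e^{-2nt}$ in the conclusion, rather than a constant of the form $C(\Omega)\,e^{-2nt}$ depending on the geometry of $\Omega$; this sharpness is exactly the content of B{\l}ocki's improvement of Herbort's original inequality. It depends critically on the \emph{optimal} form of the Ohsawa--Takegoshi extension theorem (with constant $1$ in the $L^{2}$-estimate), due to B{\l}ocki, Guan--Zhou, and Berndtsson--Lempert. The exponent $2n$ reflects the complex dimension through the $\log|z-w|$ singularity of the pluricomplex Green function $G(\cdot,w)$ at $w$ (Lelong number $1$), and only the sharp form of the $L^{2}$-extension matches this asymptotic without loss.
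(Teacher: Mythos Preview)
The paper does not supply a proof of this proposition at all: it is quoted from the literature, with a pointer to Herbort for the original inequality (with a domain-dependent constant) and to B{\l}ocki for the sharp constant $e^{-2nt}$. So there is nothing in the paper to compare your argument against.

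That said, your outline is the correct strategy and matches the proofs in the cited references. The reduction via Cauchy--Schwarz to the kernel comparison $K_{\Omega_{t}}(w,w)\le e^{2nt}K_{\Omega}(w,w)$, followed by the extremal characterization of the Bergman kernel and an $L^{2}$-extension from $\Omega_{t}$ to $\Omega$ built by cutting off and solving $\bar\partial$ with a weight modeled on $G(\cdot,w)$, is exactly how the result is proved. One clarification on the final paragraph: although the sharp constant \emph{can} be obtained by invoking the optimal Ohsawa--Takegoshi theorem, B{\l}ocki's own argument in the cited paper does not go through Ohsawa--Takegoshi; it uses the Donnelly--Fefferman/Berndtsson variant of H\"ormander's $L^{2}$-estimate directly (combined with a tensor-power trick on $\Omega^{m}\subset\mathbb{C}^{nm}$ to eliminate the domain-dependent factor). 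The two routes are closely related, and either yields the sharp $e^{-2nt}$, so your sketch is essentially sound.
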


The next lemma provides a relation between the Bergman kernel and the pole $w$ on the sublevel set $\left\{ G\left(\cdot,w\right)<-1\right\} $ for the Hartogs triangles $\Omega_{k}$.

\begin{lemma}\label{Plu1}
	Let $w\in\Omega_{k}$,
	\[
	K\left(z,z\right)\left|z_{2}\right|^{2}\thickapprox K\left(w,w\right)\left|w_{2}\right|^{2}
	\]
	for any $z\in\left\{ G\left(\cdot,w\right)<-1\right\} $.
\end{lemma}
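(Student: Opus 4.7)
The plan is to combine the explicit kernel estimate \eqref{eq:Pluq2} with carefully chosen plurisubharmonic competitors for $G(\cdot,w)$. First I would rewrite \eqref{eq:Pluq2} in a form that exposes a natural product structure: factoring $|z_2|^2-|z_1|^{2k}=|z_2|^2\bigl(1-|z_1^k/z_2|^2\bigr)$ turns \eqref{eq:Pluq2} into
\begin{equation*}
K(z,z)|z_2|^2 \approx \frac{1}{(1-|z_2|^2)^2 \bigl(1-|z_1^k/z_2|^2\bigr)^2}.
\end{equation*}
Hence it is enough to show that $1-|z_2|^2\approx 1-|w_2|^2$ and $1-|z_1^k/z_2|^2\approx 1-|w_1^k/w_2|^2$ uniformly on $\{G(\cdot,w)<-1\}$.

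For this, the key observation is that both $z_2$ and $z_1^k/z_2$ are holomorphic maps from $\Omega_k$ into the unit disc $D$ (the second being well-defined because $z_2\neq 0$ on $\Omega_k$, and taking values in $D$ because $|z_1|^k<|z_2|$). Denote by $\phi_a(\zeta):=(\zeta-a)/(1-\bar a\zeta)$ the standard disc automorphism and set
\begin{equation*}
f_1(z):=\phi_{w_2}(z_2), \qquad f_2(z):=\phi_{w_1^k/w_2}(z_1^k/z_2).
\end{equation*}
Both $f_j$ are holomorphic on $\Omega_k$, bounded by $1$ in modulus, and vanish at $z=w$. Consequently $u_j:=\log|f_j|\in PSH^{-}(\Omega_k)$, and since $f_j(w)=0$ the usual Taylor estimate gives $|f_j(z)|\lesssim|z-w|$ near $w$, so $\limsup_{z\to w}\bigl(u_j(z)-\log|z-w|\bigr)<\infty$. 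Thus each $u_j$ is admissible in the supremum defining $G(\cdot,w)$, giving $u_j\le G(\cdot,w)$ on $\Omega_k$. On the sublevel set $\{G(\cdot,w)<-1\}$ this yields $|f_j(z)|<e^{-1}$ for $j=1,2$.

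The final step is to invoke the standard hyperbolic-disc estimate: if $a,\zeta\in D$ and $|\phi_a(\zeta)|<\rho<1$, then $1-|\zeta|^2\approx 1-|a|^2$ with comparability constants depending only on $\rho$ (this follows at once from the identity $1-|\phi_a(\zeta)|^2=(1-|a|^2)(1-|\zeta|^2)/|1-\bar a\zeta|^2$ together with $1-|a|\le|1-\bar a\zeta|\le 2$). Applying this with $(\zeta,a)=(z_2,w_2)$ and $(\zeta,a)=(z_1^k/z_2,\,w_1^k/w_2)$ gives the two comparabilities required in the first paragraph, and substituting back into the rewritten form of $K(z,z)|z_2|^2$ concludes the proof.

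The main potential pitfall, and the one point worth flagging, is verifying that $u_2=\log|f_2|$ is admissible for $G(\cdot,w)$: its zero set is a complex hypersurface through $w$ rather than the isolated point $w$, so $u_2$ can be far more negative than $\log|z-w|$ along that hypersurface. This is harmless, however, because the definition of $G(\cdot,w)$ only demands an upper bound $u_2(z)\le\log|z-w|+O(1)$ near $w$, and this one-sided estimate is a direct consequence of the Lipschitz bound $|f_2(z)|\le C|z-w|$ supplied by $f_2(w)=0$ and the holomorphy of $f_2$.
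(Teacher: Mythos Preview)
Your proof is correct and follows essentially the same route as the paper's: both arguments reduce to showing $1-|z_2|^2\approx 1-|w_2|^2$ and $1-|z_1^k/z_2|^2\approx 1-|w_1^k/w_2|^2$ on the sublevel set, via the disc-automorphism identity you quote. The only cosmetic difference is that the paper packages the two competitors together by invoking the explicit product formula $G_{D\times D}((\zeta_1,\zeta_2),(a_1,a_2))=\max_j\log|\phi_{a_j}(\zeta_j)|$ and the monotonicity $G_{D\times D}(F(z),F(w))\le G_{\Omega_k}(z,w)$ for the holomorphic map $F(z)=(z_1^k/z_2,z_2)$, whereas you verify directly that each $\log|f_j|$ is an admissible competitor for $G_{\Omega_k}(\cdot,w)$; these are two phrasings of the same inequality.
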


\begin{proof}
	We first prove the following elementary fact.\\
{\bf Claim}: {\it   
	If $a,b\in D$ and 
	$\left|\dfrac{a-b}{1-a\overline{b}}\right|<\dfrac{1}{e}$
then $ 1-\left|a\right|^2\approx 1-\left|b\right|^2$.}
\begin{proof}[Proof of the claim] Set $\xi=\frac{b-a}{1-a\bar b}$. Then $a=\frac{b-\xi}{1-\xi\bar b}$ and hence $\frac{1-|a|^2}{1-|b|^2}=\frac{1-|\xi|^2}{|1-\xi \bar b|^2}$. This implies the stated claim by the fact that 
	$$\frac{e-1}{e+1}<\frac{1-|\xi|}{1+|\xi|}\le \frac{1-|\xi|^2}{|1-\xi \bar b|^2}\le \frac{1+|\xi|}{1-|\xi|}<\frac{e+1}{e-1},\quad \text{since $b\in D$}. $$
\end{proof}
We now proceed the proof of Lemma \ref{Plu1}. 
	Recall that, see e.g. \cite{Kli95}, for $z=\left(z_{1},z_{2}\right)$ and $w=\left(w_{1},w_{2}\right)$,
	\[
	G_{D\times D}\left(z,w\right)=\max\left\{ \log\left|\dfrac{z_{1}-w_{1}}{1-z_{1}\overline{w_{1}}}\right|,\log\left|\dfrac{z_{2}-w_{2}}{1-z_{2}\overline{w_{2}}}\right|\right\} .
	\]
	
	Since the map 
	\begin{eqnarray*}
		F:\Omega_{k} & \longrightarrow & D\times D\\
		\left(z_{1},z_{2}\right) & \longrightarrow & \left(\dfrac{z_{1}^{k}}{z_{2}},z_{2}\right)
	\end{eqnarray*}
	is holomorphic, we obtain 
	\[
	G_{D\times D}\left(F\left(z_{1},z_{2}\right),F\left(w_{1},w_{2}\right)\right)\leq G_{\Omega_{k}}\left(\left(z_{1},z_{2}\right),\left(w_{1},w_{2}\right)\right)
	\]
	for any $z,w\in\Omega_{k}$. Thus for any $z\in\left\{ z\in\Omega_{k}:G\left(z,w\right)<-1\right\} $,
	we have
$$
	\left|\dfrac{\frac{z_{1}^{k}}{z_{2}}-\frac{w_{1}^{k}}{w_{2}}}{1-\frac{z_{1}^{k}\overline{w_{1}}^{k}}{z_{2}\overline{w_{2}}}}\right|<\dfrac{1}{e}\text{ and }\left|\dfrac{z_{2}-w_{2}}{1-z_{2}\overline{w_{2}}}\right|<\dfrac{1}{e}.$$
Using the above claim, it follows 
	\begin{equation}
	1-|z_2|^2\approx 1-|w_2|^2\quad \text{and }\quad 1-\left|\frac{z_1^k}{z_2}\right|^2\approx 1-\left|\frac{w_1^k}{w_2}\right|^2. \label{eq:Pluq4}
\end{equation}
	Now, the desired result can be obtained by  \eqref{eq:Pluq4} and the fact $$K(z,z)|z_2|^2\approx \left[(1-|z_2|^2)\left(1-\left|\frac{z_1^k}{z_2}\right|^2\right)\right]^{-2}.$$
\end{proof}

We now turn to the proof of the necessary condition in Theorem~\ref{thmIn1} $(ii)$. 
\begin{theorem}\label{thm 3}
	Let $1<p\leq q<2+\frac{2}{k}$ and let $\alpha\in\mathbb{R}$. Assume that $T_{K^{-\alpha}}:L^{p}\left(\Omega_{k}\right)\to A^{q}\left(\Omega_{k}\right)$ continuously.  Then $\alpha\geq\frac{1}{p}-\frac{1}{q}$.
\end{theorem}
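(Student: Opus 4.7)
My plan is to dualize the hypothesis into a weighted inequality on $A^{q'}(\Omega_k)$ and then test it against the Bergman kernel $K(\cdot,w_0)$ at a point $w_0$ approaching a regular part of $\partial\Omega_k$. Using $\overline{K(z,w)}=K(w,z)$ and Fubini, the Hilbert-space adjoint of $T_{K^{-\alpha}}$ sends $g\mapsto K^{-\alpha}(\cdot,\cdot)Pg$, where $P$ is the Bergman projection on $\Omega_k$; since $Pg=g$ whenever $g\in A^{q'}(\Omega_k)$, the assumed boundedness of $T_{K^{-\alpha}}:L^p(\Omega_k)\to A^q(\Omega_k)$ is equivalent to
\[
\|K^{-\alpha}(\cdot,\cdot)g\|_{L^{p'}(\Omega_k)}\lesssim\|g\|_{L^{q'}(\Omega_k)}\qquad\text{for all }g\in A^{q'}(\Omega_k).
\]

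I will take $w_0=(0,b)\in\Omega_k$ with $b\in(0,1)$ and let $b\to1^-$, so that $K(w_0,w_0)\approx(1-b^2)^{-2}\to\infty$ while $|w_{0,2}|=b$ stays bounded away from zero. Testing the dual inequality with $g(z):=K(z,w_0)$, a direct polar-coordinate computation starting from \eqref{eq:Pluq1} yields the upper bound
\[
\|K(\cdot,w_0)\|_{L^{q'}(\Omega_k)}\lesssim K(w_0,w_0)^{1-1/q'}b^{1-2/q'}.
\]
For the matching lower bound I restrict integration to the Green-function sublevel set $E_{w_0}:=\{z\in\Omega_k:G(z,w_0)<-1\}$. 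On $E_{w_0}$, Lemma~\ref{Plu1} and the fact that $|w_2|\approx b$ there imply $K(w,w)\approx K(w_0,w_0)$, hence $K^{-\alpha}(w,w)\approx K^{-\alpha}(w_0,w_0)$, and the pointwise Cauchy--Schwarz inequality $|K(w,w_0)|^2\le K(w,w)K(w_0,w_0)$ gives $|K(w,w_0)|\lesssim K(w_0,w_0)$. Proposition~\ref{HER} applied to $h=K(\cdot,w_0)$ supplies the $L^2$ estimate $\int_{E_{w_0}}|K(\cdot,w_0)|^2\,dV\gtrsim K(w_0,w_0)$, while a two-sided volume estimate $|E_{w_0}|\approx K(w_0,w_0)^{-1}$ follows from Cauchy--Schwarz in one direction and from the inclusion $E_{w_0}\subseteq F^{-1}(\{G_{D\times D}(\cdot,F(w_0))<-1\})$ (with $F(z)=(z_1^k/z_2,z_2)$, as in the proof of Lemma~\ref{Plu1}) in the other. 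A standard dichotomy argument then produces a subset of $E_{w_0}$ of measure $\gtrsim K(w_0,w_0)^{-1}$ on which $|K(\cdot,w_0)|\gtrsim K(w_0,w_0)$, so that $\int_{E_{w_0}}|K(\cdot,w_0)|^{p'}\,dV\gtrsim K(w_0,w_0)^{p'-1}$ and consequently
\[
\|K^{-\alpha}K(\cdot,w_0)\|_{L^{p'}(\Omega_k)}\gtrsim K(w_0,w_0)^{1-\alpha-1/p'}.
\]

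Inserting the upper and lower bounds into the dual inequality and letting $b\to1^-$ forces $1-\alpha-1/p'\le 1-1/q'$, which rearranges to the desired $\alpha\ge 1/p-1/q$. The main technical obstacle is the passage from the $L^2$-type Herbort--B\l{}ocki estimate to the $L^{p'}$-type lower bound on $\int_{E_{w_0}}|K(\cdot,w_0)|^{p'}\,dV$; bridging this gap is what forces us to combine the pointwise $L^\infty$ control on $K(\cdot,w_0)|_{E_{w_0}}$ with a two-sided volume estimate for $E_{w_0}$, both of which are accessible thanks to Lemma~\ref{Plu1} and the explicit Bergman-kernel formula.
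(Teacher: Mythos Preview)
Your approach uses the same core ingredients as the paper---Proposition~\ref{HER}, Lemma~\ref{Plu1}, the kernel integral estimates, and a boundary limit---but organizes them differently. The paper does not dualize: it works directly with the scalar quantity
\[
I(z):=\int_{\Omega_k}|K(z,w)|^{2}K^{-\alpha}(w,w)\,|w_2|^{2}\,dw,
\]
bounds it above by $\|K(z,\cdot)(\cdot)_2\|_{L^p}\|K(z,\cdot)(\cdot)_2\|_{L^{q'}}$ via the operator bound and H\"older, and bounds it below by restricting to $\{G(\cdot,z)<-1\}$ and applying Proposition~\ref{HER} to the holomorphic function $w\mapsto K(z,w)w_2^{2}$. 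Because everything stays at the $L^2$ level, no dichotomy/volume argument is needed; your extra step converting the Herbort--B\l ocki $L^2$ estimate into an $L^{p'}$ lower bound is correct but avoidable.

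There is, however, a genuine gap in your proposal as written. With $w_0=(0,b)$ one has $K(z,w_0)\sim c/z_2$ near $z_2=0$, so after integrating out $z_1$ the $L^{q'}$ norm involves $\int_0^1 r_2^{-q'+2/k+1}\,dr_2$, which is finite only when $q'<2+\tfrac{2}{k}$, i.e.\ $q>\tfrac{2k+2}{k+2}$. For $q\le\tfrac{2k+2}{k+2}$ your test function is not in $L^{q'}(\Omega_k)$ and the duality inequality cannot be applied. The paper's inclusion of the factor $w_2$ is precisely what cures this: it shifts the exponent so that the condition $-a+2b+c+\tfrac{2}{k}>-2$ of Proposition~\ref{Pro-thmIn1} holds unconditionally. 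You can make the same fix by testing with $g(z)=z_2K(z,w_0)$ (which is still holomorphic and in $A^2$), after which your argument goes through for the full range $1<p\le q<2+\tfrac{2}{k}$. For the application to Theorem~\ref{thmIn1}(ii) the issue is actually moot, since that regime already forces $q>\tfrac{2k+2}{k+2}$, but Theorem~\ref{thm 3} as stated requires the repair.
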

\begin{proof} 
First, we may assume that $\alpha<1$, otherwise it implies the conclusion. Since $K(w,z)w_2$ is holomorphic in $w$, one has
\begin{eqnarray*}
\intop_{\Omega_{k}}\left|K\left(z,w\right)\right|^{2}K^{-\alpha}\left(w,w\right)\left|w_{2}\right|^{2}dw & = & \hspace*{-0.15cm}\intop_{\Omega_{k}}K\left(z,w\right)K^{-\alpha}\left(w,w\right)\overline{w_{2}}K\left(w,z\right)w_{2}dw\\
 & = & \hspace*{-0.15cm}\intop_{\Omega_{k}}K\left(z,w\right)K^{-\alpha}\left(w,w\right)\overline{w_{2}}\left(\intop_{\Omega_{k}}K\left(w,\xi\right)K\left(\xi,z\right)\xi_{2}d\xi\right)dw\\
 & = & \hspace*{-0.15cm}\intop_{\Omega_{k}}\left(\intop_{\Omega_{k}}K\left(w,\xi\right)K^{-\alpha}\left(w,w\right)K\left(z,w\right)\overline{w_{2}}dw\right)K\left(\xi,z\right)\xi_{2}d\xi.
\end{eqnarray*}
By H{\"o}lder's inequality and the boundedness of $T_{K^{-\alpha}}$, it continues as
\begin{eqnarray}
\intop_{\Omega_{k}}\left|K\left(z,w\right)\right|^{2}K^{-\alpha}\left(w,w\right)\left|w_{2}\right|^{2}dw & \leq & \left\Vert \intop_{\Omega_{k}}K\left(w,\cdot\right)K^{-\alpha}\left(w,w\right)K\left(z,w\right)\overline{w_{2}}dw\right\Vert _{L^{q}}\hspace*{-0.15cm}\left\Vert K\left(z,\cdot\right)\left(\cdot\right)_{2}\right\Vert _{L^{q'}}\nonumber \\
 & \lesssim & \left\Vert K\left(z,\cdot\right)\left(\cdot\right)_{2}\right\Vert _{L^{p}\left(\Omega_{k}\right)}\left\Vert K\left(z,\cdot\right)\left(\cdot\right)_{2}\right\Vert _{L^{q'}\left(\Omega_{k}\right)},\label{eq:pluri1}
\end{eqnarray}
where $\frac{1}{q}+\frac{1}{q'}=1$. 
By Lemma \ref{Pro-thmIn1}, 
\begin{eqnarray}
\left\Vert K\left(z,\cdot\right)\left(\cdot\right)_{2}\right\Vert _{L^{p}\left(\Omega_{k}\right)}\left\Vert K\left(z,\cdot\right)\left(\cdot\right)_{2}\right\Vert _{L^{q'}\left(\Omega_{k}\right)} & \lesssim & \left|z_{2}\right|^{2-\frac{2}{p}-\frac{2}{q'}}K^{2-\frac{1}{p}-\frac{1}{q'}}\left(z,z\right)\label{eq:Plute2}\\
 & = & |z_{2}|^{\frac{2}{q}-\frac{2}{p}}K^{1-\frac{1}{p}+\frac{1}{q}}(z,z).\nonumber 
\end{eqnarray}

Note that the appearance of the term $w_{2}$ above allows us to apply Lemma \ref{Pro-thmIn1}. On the other hand, the LHS of \eqref{eq:pluri1} satisfies
\begin{eqnarray*}
\intop_{\Omega_{k}}\left|K\left(z,w\right)\right|^{2}K^{-\alpha}\left(w,w\right)\left|w_{2}\right|^{2}dw & \geq & \intop_{\left\{ G\left(\cdot,z\right)<-1\right\} }\left|K\left(z,w\right)\right|^{2}K^{-\alpha}\left(w,w\right)\left|w_{2}\right|^{2}dw\\
 & = & \intop_{\left\{ G\left(\cdot,z\right)<-1\right\} }\left|K\left(z,w\right)\right|^{2}\left|w_{2}\right|^{2+2\alpha}\left(K\left(w,w\right)\left|w_{2}\right|^{2}\right)^{-\alpha}dw\\
 & \gtrsim & \left(K\left(z,z\right)\left|z_{2}\right|^{2}\right)^{-\alpha}\intop_{\left\{ G\left(\cdot,z\right)<-1\right\} }\left|K\left(z,w\right)w^{2}_{2}\right|^{2}dw\\
 & \gtrsim & \left(K\left(z,z\right)\left|z_{2}\right|^{2}\right)^{-\alpha}K\left(z,z\right)\left|z_{2}\right|^{4}.\\
\end{eqnarray*}
Here we have used Lemma \ref{Plu1} and Proposition \ref{HER}.
From this, \eqref{eq:pluri1} and \eqref{eq:Plute2}, we get $\alpha\geq\frac{1}{p}-\frac{1}{q}$
by letting $z_{2}\rightarrow1$.
This completes the proof of Theorem \ref{thm 3}.

\end{proof}

The second approach is to construct an appropriate sequence $\left\{ f_{j}\right\} $
and establish the lower bound from the hypothesis 
\begin{equation}\label{eq:Sha1}
\sup_{j}\dfrac{\left\Vert T_{K^{-\alpha}}\left(f_{j}\right)\right\Vert _{L^{q}\left(\Omega_{k}\right)}}{\left\Vert f_{j}\right\Vert _{L^{p}\left(\Omega_{k}\right)}}<\infty.
\end{equation}
This approach is standard and depends quite heavily on the intrinsic
information of our domains $\Omega_{k}.$ 
Let us use this approach to prove  the necessary condition in Theorem~\ref{thmIn1} $(iii)$. 
\begin{theorem}\label{thm 4}
	Let $1<p\leq q<2+\frac{2}{k}$ and let $\alpha\geq0$.  Assume that  $T_{K^{-\alpha}}:L^{p}\left(\Omega_{k}\right)\to A^{q}\left(\Omega_{k}\right)$ continuously.  Then $\alpha>\frac{k+1}{kp}-\frac{k+2}{2k}$.
\end{theorem}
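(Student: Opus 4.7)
The plan is to construct a one-parameter family of $L^p$-test functions concentrated near the singular point $(0,0)$ of $\Omega_k$ and read off the lower bound on $\alpha$ from the asymptotic behavior as the parameter approaches a critical value. Specifically I would take
\[
f_s(w) := \overline{w_2}\,|w_2|^{-s},\qquad 0<s<s_1:=1+\frac{1}{p}\left(2+\frac{2}{k}\right),
\]
so that each $f_s\in L^p(\Omega_k)$. A direct polar-coordinate computation with $w_j=r_je^{i\theta_j}$ yields
\[
\|f_s\|_{L^p(\Omega_k)}^p\ \approx\ \int_0^1 r_2^{p-sp+1+\frac{2}{k}}\,dr_2\ \approx\ \frac{1}{s_1-s}\qquad\text{as }s\to s_1^-.
\]

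The next step is to evaluate $T_{K^{-\alpha}}(f_s)$ explicitly by the orthogonal-basis argument of Theorem \ref{thm 1}. Since both $|w_2|^{-s}$ and $K^{-\alpha}(w,w)$ are rotationally invariant, expanding $K(z,w)$ in $\{z^\beta:\beta\in\mathcal B\}$ and carrying out the angular integrations
\[
\int_0^{2\pi}e^{-i\beta_1\theta_1}\,d\theta_1\cdot \int_0^{2\pi}e^{-i(\beta_2+1)\theta_2}\,d\theta_2
\]
force $\beta=(0,-1)$; only this single term survives. Hence $T_{K^{-\alpha}}(f_s)(z)=C\,z_2^{-1}\,I(s)$ where, after the substitution $u=r_1^k/r_2$ in the inner $r_1$-integral (which becomes a finite beta integral in $u$) and the sharp estimate \eqref{eq:Pluq2},
\[
I(s)\ \approx\ \int_0^1 r_2^{s_2-s-1}(1-r_2^2)^{2\alpha}\,dr_2\ \approx\ \frac{1}{s_2-s}\qquad\text{as }s\to s_2^-,\quad s_2:=2+2\alpha+\frac{2}{k}.
\]
Because $q<2+2/k$ gives $\|z_2^{-1}\|_{L^q(\Omega_k)}<\infty$, this produces $\|T_{K^{-\alpha}}(f_s)\|_{L^q}\approx I(s)$.

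The boundedness hypothesis $T_{K^{-\alpha}}:L^p(\Omega_k)\to A^q(\Omega_k)$ then forces the uniform estimate
\[
(s_2-s)^{-1}\ \lesssim\ (s_1-s)^{-1/p}\qquad \text{for every }s<\min(s_1,s_2).
\]
If $s_2\le s_1$, letting $s\to s_2^-$ yields a contradiction: when $s_2<s_1$ the left side blows up while the right side stays bounded, and in the borderline case $s_2=s_1$ the left blows up strictly faster than the right since $p>1$ implies $1>1/p$. Therefore $s_2>s_1$ strictly, which after rearrangement is exactly
\[
\alpha\ >\ \frac{k+1}{kp}-\frac{k+2}{2k}.
\]
The main technical obstacle is to pin down the sharp blow-up rate of $I(s)$ as $s$ approaches $s_2$; the singularity lives at $r_2=0$ (the factor $(1-r_2^2)^{2\alpha}$ is harmless under the hypothesis $\alpha\ge 0$), so the asymptotic reduces to a one-variable Mellin-type estimate, after which the exponent comparison is routine bookkeeping.
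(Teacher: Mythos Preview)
Your approach is correct and essentially the same as the paper's: both use test functions of the form $\bar w_2\times(\text{radial in }|w_2|)$ concentrated near the singular point $w_2=0$, reduce $T_{K^{-\alpha}}(f)$ to a scalar multiple of $z_2^{-1}$ via the angular/orthogonal-basis argument of Theorem~\ref{thm 1}, and read off the constraint on $\alpha$ from the resulting norm ratio. The paper implements this with a discrete piecewise family $f_j$ (built from dyadic-type shells $a_{l+1}<|z_2|\le a_l$) whereas you use the cleaner continuous power $f_s=\bar w_2|w_2|^{-s}$ and a blow-up-rate comparison at the critical exponent; the only technical wrinkle worth making explicit is that $f_s\notin L^2(\Omega_k)$ once $s>2+\tfrac1k$ (and $s_2=2+2\alpha+\tfrac2k>2+\tfrac1k$ always), so the identification $T_{K^{-\alpha}}(f_s)=C\,I(s)\,z_2^{-1}$ should be justified by first truncating to $f_s\chi_{\{|w_2|>\epsilon\}}\in L^2\cap L^p$, computing there, and passing to the limit $\epsilon\to0$ using the assumed $L^p\to L^q$ boundedness together with the convergence of $I(s)$ for $s<s_2$.
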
 
\begin{proof}
	We employ a similar computation as in \cite{ChL17}. Define
	the sequence $\left\{ f_{j}\right\} _{j=1}^{\infty}$ by 
	\[
	f_{j}\left(z\right):=\begin{cases}
	h\left(\left|z_{2}\right|\right)\overline{z_{2}} & \text{; }\qquad a_{j+1}<\left|z_{2}\right|<1,\\
	0 & \text{; \qquad elsewhere, }
	\end{cases}
	\]
	where $a_{j}:=\frac{1}{j^{j}}$ and the function $h:\left(0,1\right] \rightarrow\left(0,\infty\right)$
	is defined by 
	\[
	h\left(x\right):=x^{\frac{1}{l}-1-\left(2+\frac{2}{k}\right)\frac{1}{p}}\text{ for }\;x\in\left(a_{l+1},a_{l}\right];\quad l=1,2,{\scriptstyle \ldots}.
	\]
	We now can easily check that 
	\begin{eqnarray*}
\left\Vert f_{j}\right\Vert _{L^{p}\left(\Omega_{k}\right)}^{p} & \lesssim & \intop_{a_{j+1}}^{1}h^{p}\left(r_{2}\right)r_{2}^{p+\frac{2}{k}+1}dr_{2}\lesssim\sum_{l=1}^{\infty}\left(\intop_{a_{l+1}}^{a_{l}}h^{p}\left(r_{2}\right)r_{2}^{p+\frac{2}{k}+1}dr_{2}\right)\\
 & \lesssim & \sum_{l=1}^{\infty}l\left(l^{-p}-\left(l+1\right)^{-\left(1+\frac{1}{l}\right)p}\right)\\
 & \lesssim & \sum_{l=1}^{\infty}\dfrac{1}{l^{p}}.
\end{eqnarray*}
Therefore 
\begin{equation}
\sup\left\{ \left\Vert f_{j}\right\Vert _{L^{p}\left(\Omega_{k}\right)}^{p}:j\in\mathbb{Z}^{+}\right\} <c_{0}:=\sum_{l=1}^{\infty}\frac{1}{l^{p}}.\label{eq:Sha3}
\end{equation}

By construction, $f_{j}\in L^{2}\left(\Omega_{k}\right)$, for any $j\in\mathbb{Z}^{+}$. Thus we can make use of the above orthogonal basis $\mathcal{B}$
	of $A^{2}\left(\Omega_{k}\right)$ to obtain   
	\[
	T_{K^{-\alpha}}\left(f_{j}\right)=C\left(\intop_{a_{j+1}}^{1}\intop_{0}^{r_{2}^{\frac{1}{k}}}\left(1-r_{2}\right)^{2\alpha}\left(r_{2}-r_{1}^{k}\right)^{2\alpha}r_{1}r_{2}h\left(r_{2}\right)dr_{1}dr_{2}\right)\dfrac{1}{z_{2}},
	\]
	for a constant $C$ independent of $j.$ Therefore 
	\begin{equation}
	\left\Vert T_{K^{-\alpha}}\left(f_{j}\right)\right\Vert _{L^{q}\left(\Omega_{k}\right)}^{q}\gtrsim\sum_{l=1}^{j}\left(\intop_{a_{l+1}}^{a_{l}}r_{2}^{\frac{1}{l}-1+2\left(\alpha-\frac{k+1}{kp}+\frac{k+2}{2k}\right)}\left(1-r_{2}\right)^{2\alpha}dr_{2}\right).\label{eq:Sha2}
	\end{equation}
	
	We now  show that $\alpha>\frac{k+1}{kp}-\frac{k+2}{2k}$ by contradiction. Assume that $\alpha\le \frac{k+1}{kp}-\frac{k+2}{2k}$. Since 
	$\left(1-r_{2}\right)^{2\alpha}\geq\max\left\{ 1-2\alpha r_{2},2\alpha-2\alpha r_{2}\right\} $
	for any $\alpha\geq0$ and $r_{2}\in\left(0,1\right)$, 
	\eqref{eq:Sha2} continues as 
	\begin{eqnarray}
	\left\Vert T_{K^{-\alpha}}\left(f_{j}\right)\right\Vert _{L^{q}\left(\Omega_{k}\right)}^{q} & \gtrsim & \sum_{l=1}^{j}\left(\intop_{a_{l+1}}^{a_{l}}r_{2}^{\frac{1}{l}-1}dr_{2}\right)-\sum_{l=1}^{j}\left(\intop_{a_{l+1}}^{a_{l}}r_{2}^{\frac{1}{l}}dr_{2}\right).
	\label{eq:Sha4}
	\end{eqnarray}
	Note that the first sum of the RHS of \eqref{eq:Sha4} goes to infinity while the second sum converges as $j\rightarrow\infty$. The contradiction now follows from \eqref{eq:Sha1}, \eqref{eq:Sha3}
	and \eqref{eq:Sha4} by letting $j\rightarrow\infty$. 
\end{proof}

\bibliographystyle{apa}

\end{document}